\documentclass[11pt,a4paper]{article}
\usepackage{lipsum}
\usepackage{authblk}
\usepackage{amsmath,a4}  
\usepackage[utf8]{inputenc}
\usepackage{graphicx,enumerate,color,amsfonts,amsthm,amsmath,amssymb,mathrsfs,marginnote,dsfont,textcomp,tikz,marginnote,subfigure,pdflscape,multirow,booktabs,float,enumitem,scrtime,currfile,titleps,hyphenat,algorithm,algpseudocode,bigints,diagbox,multirow,mathtools,mdframed}
\usepackage[colorlinks,
    linkcolor={red!60!black},
    citecolor={black},
    urlcolor={blue!80!black}]{hyperref}
\usepackage{tikz,pgf,pgffor,pgfplots}
\tikzset{>=latex}
\usetikzlibrary{calc,patterns,decorations,intersections,decorations.pathmorphing,fit,backgrounds,arrows,shapes,snakes,automata,petri,positioning,,graphs,graphs.standard}
\usepgfmodule{shapes,plot}

\newtheorem{theorem}{Theorem}[section]

\newtheorem{lemma}[theorem]{Lemma}

\theoremstyle{definition}

\theoremstyle{remark}
\newtheorem*{remark}{Remark}

\makeatletter
\newcommand{\setword}[2]{%
  \phantomsection
  #1\def\@currentlabel{\unexpanded{#1}}\label{#2}%
}
\makeatother

\newcommand{\dd}{\textnormal{d}}

\newcommand{\p}{\textnormal{\textbf{p}}}

\newcommand{\diag}{\textnormal{\textbf{diag}}}

\newcommand{\aev}{\textnormal{a.e.}}

\newcommand{\R}{\mathbb{R}}

\newcommand{\inner}[1]{\left\langle #1\right\rangle}
\newcommand{\norm}[1]{\left\lVert #1\right\rVert}
\newcommand{\abs}[1]{\left\lvert #1\right\rvert}
\newcommand{\bra}[1]{\left(#1\right)}
\newcommand{\set}[1]{\left\{#1\right\}}
\newcommand{\intt}{\int \!\!\!\!\! \int}
\newcommand{\LM}{\mathcal{L}}
\newcommand{\PP}{\textnormal{\textbf{P}}}
\newcommand{\PM}{\mathcal{P}}
\newcommand{\VM}{\mathcal{V}}

\allowdisplaybreaks
\usepackage[top=2cm, bottom=2cm, left=2cm, right=2cm]{geometry}
\usepackage{fancyhdr}
\newpagestyle{main}{
\setheadrule{.4pt}
\sethead{}
        {}
        {\sectiontitle}
}
\pagestyle{main}

\renewenvironment{abstract}{%
\hfill\begin{minipage}{0.95\textwidth}
\rule{\textwidth}{1pt}}
{\par\noindent\rule{\textwidth}{1pt}\end{minipage}}
\makeatletter
\renewcommand\@maketitle{%
\hfill
\begin{minipage}{0.95\textwidth}
\vskip 2em
\let\footnote\thanks 
{\Large \bf \@title \par }
\vskip 1.5em
{\large \@author \par}
\end{minipage}
\vskip 1em \par
}
\makeatother
\begin{document}
%
\title{\nohyphens{Estimation of time-space-varying parameters in dengue epidemic models}}
\author[a,$\ast$]{Karunia Putra Wijaya}
\author[a]{Thomas G\"{o}tz}
\affil[a]{\small\emph{Mathematical Institute, University of Koblenz, 56070 Koblenz, Germany}}
\affil[$\ast$]{Corresponding author. Email: \href{mailto:karuniaputra@uni-koblenz.de}{karuniaputra@uni-koblenz.de}}
\maketitle
\begin{abstract}
{\textbf{Abstract}}: There are nowadays a huge load of publications about dengue epidemic models, which mostly employ deterministic differential equations. The analytical properties of deterministic models are always of particular interest by many experts, but their validity -- if they can indeed track some empirical data -- is an increasing demand by many practitioners. In this view, the data can tell to which figure the solutions yielded from the models should be; they drift all the involving parameters towards the most appropriate values. By prior understanding of the population dynamics, some parameters with inherently constant values can be estimated forthwith; some others can sensibly be guessed. However, solutions from such models using sets of constant parameters most likely exhibit, if not smoothness, at least noise-free behavior; whereas the data appear very random in nature. Therefore, some parameters cannot be constant as the solutions to seemingly appear in a high correlation with the data. We were aware of impracticality to solve a deterministic model many times that exhaust all trials of the parameters, or to run its stochastic version with Monte Carlo strategy that also appeals for a high number of solving processes. We were also aware that those aforementioned non-constant parameters can potentially have particular relationships with several extrinsic factors, such as meteorology and socioeconomics of the human population. We then study an estimation of time-space-varying parameters within the framework of variational calculus and investigate how some parameters are related to some extrinsic factors. Here, a metric between the aggregated solution of the model and the empirical data serves as the objective function, where all the involving state variables are kept satisfying the physical constraint described by the model. Numerical results for some examples with real data are shown and discussed in details.
~\\
~\\
{\textbf{Keywords}}: \textsf{Dengue epidemics, seasonal-spatial model, parameter estimation, variational calculus}
\vspace*{-5pt}
\end{abstract}


\section{Introduction}
\label{sec:intro}
Dengue is a mosquito-borne disease that has become a noteworthy global health threat over the course of past few decades. The coverage of where it spreads includes tropical and sub-tropical regions, prevalently urban and peri-urban areas. Nowadays, a constant rise of rural economy and population, overwhelming flow of trades as well as tourisms combined with the global climate change have amplified the spread of dengue in rural areas. Also, the flow of migrants from regions of conflict might bear the risk of spreading the disease, cf.~\cite{SK2014i}. Recent findings indicated a positive trend of annual cases starting from approximately 0.4 to 1.3 million in 1996--2005, then growing dramatically to 2.2 million in 2010 and 3.2 million in 2015 \cite{WHO2012i}. In South-East Asia, for example, \textsc{WHO} closed its book every year with reports of annual cases from member countries, except the Democratic People's Republic of Korea, starting from 96,330 in 1986 to 232,530 in 2009 \cite{WHO2011i}. Among others, Thailand served as the most endemic region for dengue, followed by Indonesia. However, those all aforementioned numbers seemingly underestimated the real cases due to improper, different methods of recording -- a much larger portion of people might still be in probable cases of infection \cite{BGB2013i,SSU2016i}. 

Controlling dengue outbreaks is resource intensive and an unreasonable prediction of the incidences over the next time window may lead to a high inefficiency in the resource deployment. One classical approach attempts to generate a direct relationship between the historical incidences and time using a certain regressing function or a recursive relation, and the prediction is obtained by computing the most plausible values of the involving parameters. However, not only does this approach lose much of the physics behind the phenomenon that outputs the incidence data, but it also -- in general -- lacks the extent to which one should believe that it indeed governs the occurrence of the incidences. It is therefore important to derive a suitable mathematical model based on a prior understanding of such physics, which within the present context may include the relationship between the disease pattern and the dynamics of the mosquito population, the attribution of the interrelation between human and mosquito to the disease incidences, the influence of some extrinsic factors (for example meteorology and socioeconomics) to the interrelation, and so on. The more physical properties to be considered, the more complicated the model is, but the more reasonable the result would be.

The aim of this work was to present some parameter estimation over simple SIRS models. The optimal parameters are found in the sense that the metric between the model solution and incidence data achieves its minimum. We recall a standard SIRS model that is based on the work of \textsc{Kermack--McKendrick} in \cite{KM1927i}. One known drawback of this classical SIRS model is, as it merely averages the behavior of the population within a spatial region, that it does not capture the real mechanism of how the disease spreads from a locality through some more other localities. Of course, this spreading processes take some time and every locality might be supplemented by unique initial states at the beginning of the observation. It is also desirable to take the inhomogeneous population density into consideration to have a more reasonable model. 

We shall then extend the classical SIRS model to its reaction-diffusion version. It features basic characteristics that describe a large spectrum of biological phenomena, see \cite{Mur1989i}. The additional diffusion term can be a point of departure for modeling the situation where a group of infected individuals initially concentrates in a locality, then gradually spreads through a larger area with irregular motions. Another interesting feature of reaction-diffusion model is that its analytical properties are strongly dependent on the reaction term. Often, the existence and boundedness of a solution are settled if the invariant set of the spatially homogeneous system is bounded, which cannot be the case for some complicated reaction terms. However, we do not lay novelty in the modeling perspective. Many previous publications have dealt with some more enrichments, including sophisticated functional response in the infection term \cite{LMH2014i} and cross-diffusions \cite{OL2001i,MK1980i,MY1982i,GGJ2001i,BL2010i} to take into consideration the fact that a healthy person has more probability of getting infected if (s)he is spatially close to the infected people than the other healthy people.  

Here, we merely use a toy model to investigate the time and spatial dependencies of some parameters with which the corresponding unique aggregated solution tracks given epidemic data. It is considered very important to draw such a leading pattern of the endemicity to take appropriate precautions and deploy optimal health measures that help diminish the effects of the disease in the future. The rationale of using time-space-varying parameters is that the solution can track the data more accurately as compared to applying constant parameters to the same model and to that with additional noise.

We use the data of hospitalized cases from the city of Semarang, Indonesia for the test problem, which can be counted as a source of novelty in this paper. We highlight the fact that the immature phases of mosquito are aquatic and some of them may sometimes live in some sites where meteorological parameters fluctuate. Therefore, the dynamics of mosquito is also dependent on weather. This is a point where meteorology indirectly affects the infection rate.

\section{Problem formulation}
\label{sec:problem}
Let us consider an open spatial domain $\Omega\subset\mathbb{R}^2$, which is assumed to be bounded \textsc{Lipschitz}ian, meaning that the boundary $\partial\Omega$ can be locally graphed by  \textsc{Lipschitz} continuous functions. For the sake of notational convenience, we use the $y_1y_2y_3$--notation as a replacement for the standard SIR--notation. Let $y=y(t,x)$ denote the density of the total population at time $t\in [0,T)$ and spatial location $x\in \Omega$, comprising the density of susceptible people $y_1=y_1(t,x)$, the density of infected people $y_2=y_2(t,x)$, and that of recovered people $y_3=y_3(t,x)$. Note that $T>0$ is a fixed endpoint of the observation period. To better understand the biological meaning of the model, we have imposed several assumptions:
\begin{enumerate}[label=\normalfont (A\arabic*)]
\item Any newborn is always susceptible and the population is homogeneous in the sense that each individual has the same right to contribute to the growth as well as the decline of the population.\label{asm:newborn} 
\item The number of incidences is equivalent to the number of \emph{Aedes aegypti} mosquitoes. Meanwhile according to \textsc{Gubler} and \textsc{Meltzer} in \cite{GM1999i}, the number of incidences is equivalent to that of humans. We then exclude the dynamics of mosquito in the model due to this transitiveness. Therefore, the rate at which the disease propagates is simply proportional to the product of susceptible and infected people. Here, the proportionality varies in time and space determined by the infection rate $\beta=\beta(t,x)$. \label{asm:beta}
\item The growth as well as the death rates are proportional to the population size -- the model has to be derived in such a way that the total population throughout the whole domain is constant. \label{asm:constpop}
\item Any susceptible person experiences a very short incubation period, allowing her (him) to clinically move to the infected stage immediately after the infection. \label{asm:zeroincub}
\item There are no migrations out of and into the domain, i.e. the model exhibits the flux-free boundary conditions. \label{asm:fluxfree} 
\item A simple consideration to what is called \emph{temporary cross immunity}, cf. \cite{ABK2011i}, is reflected from letting some portion of the recovered people to move back to the susceptible stage. \label{asm:ade} 
\end{enumerate}
Considering all the given assumptions, we derive a reaction-diffusion model:
\begin{subequations}\label{eq:model}
\begin{alignat}{2}
	\partial_t y_1-d_1\Delta y_1&=\mu (y-y_1) - \beta\frac{y_1y_2}{y} + \kappa y_3\quad &&\text{in }Q :=  (0,T) \times \Omega,\label{eq:S}\\
	\partial_t y_2-d_2\Delta y_2&=\beta\frac{y_1y_2}{y} - (\gamma+\mu) y_2\quad &&\text{in }Q,\label{eq:I}\\
	\partial_t y_3-d_3\Delta y_3&= \gamma y_2 - (\kappa+\mu) y_3\quad &&\text{in }Q,\label{eq:R}\\ 
	0&=\partial_{\nu} y_1=\partial_{\nu} y_2=\partial_{\nu} y_3\quad &&\text{on }\Sigma :=  (0,T) \times \partial\Omega,\label{eq:flux}\\ 
	y_1&=y_{1,0},\,y_2=y_{2,0},\,y_3=y_{3,0}\quad &&\text{in }Q_0 :=  \{0\}\times \Omega.\label{eq:init}
\end{alignat}
\end{subequations}
We denote by $d_1,d_2,d_3$ the diffusion coefficients, by $\mu,\beta,\gamma,\kappa$ the death rate, the infection rate, the recovery rate, and the rate of transition indicating the simplest representation of \emph{temporary cross immunity}, respectively. We assume that all those parameters are positive. In the virtue of information sharing among experts, those constant parameters can somehow be defined as to lie within a set of feasible values. The first line \eqref{eq:S} indicates the assumptions~\ref{asm:newborn} and \ref{asm:beta}. Another technical motivation behind imposing \ref{asm:beta} is twofold: that there is no data of mosquito population at least in the region of observation, and that after time scale separation in a typical host-vector model under constant mosquito population, the dynamics of mosquito population at a faster time scale goes towards an equilibrium where a close relationship between the seasonal variation of mosquito and that of human population can explicitly be inferred, see for the details in \cite{RAS2013i,GAB2017i}. The second and third lines \eqref{eq:I}--\eqref{eq:R} indicate the assumptions~\ref{asm:zeroincub} and \ref{asm:ade} respectively. The fourth line \eqref{eq:flux} is the homogeneous \textsc{Neumann} boundary condition, given $\nu$ the outward unit normal to $\Omega$ along its boundary $\partial \Omega$, which is nothing else but the assumption~\ref{asm:fluxfree}. Furthermore, summing up \eqref{eq:S}--\eqref{eq:R} and taking the integration over $\Omega$ return in the assumption~\ref{asm:constpop}. The last line \eqref{eq:init} indicates the initial condition. For the sake of simplicity, we rewrite as $f_1,f_2,f_3$ the reaction terms that associate with $y_1,y_2,y_3$, respectively.


Suppose that only a dataset of hospitalized cases $y_2^{d}$ is available. A first technical issue arises as some data points might not lie exactly at the equidistant spatial grid-points specified by a uniform discretization over the system \eqref{eq:model} or, moreover, exhibit \emph{granulations} (partial densenesses). To this unfavorable case we have to interpolate and extrapolate the data points. Now, for the only sake of mathematical convenience, we let $y_2^d$ inter- and extrapolated to occupy the whole domain $[0,T)\times \Omega$. Our main goal in this article is to solve the following fitting problem
\begin{align}\label{eq:optee}
	&\min_{\beta\in L^{\infty}(0,T)}J(y_1,y_2,y_3,\beta):= \intt_{Q} \frac{1}{2}\bra{y_2-y_2^d}^2\,\dd x\,\dd t+\frac{\omega}{2}\intt_{Q} \beta^2\,\dd x\,\dd t\nonumber\\
	&\text{subject to }\beta_{\min}\stackrel{\aev}{\leq}\beta\stackrel{\aev}{\leq} \beta_{\max}\text{ and }\eqref{eq:model}. 
\end{align} 
Minimizing the objective functional $J$ reflects our aim that, not only does $\beta$ have to lead the model solution $y_2$ to be as close to the given data $y_2^d$ as possible, but it also cannot be arbitrarily large. The \emph{regularization} parameter $\omega>0$ in the second term accounts for managing the trade-off between these conflicting criteria. The appearing quadratic terms in the objective functional also represent our concern to regularity of the solution.

If the diffusion parameters $d_1=d_2=d_3=0$, then we call the remaining system in \eqref{eq:model} as a \emph{spatially homogeneous system}, otherwise a \emph{spatially inhomogeneous system}. Vanishing $d_1,d_2,d_3$ means omitting the dispersal of individuals. Accordingly, an individual that was initially set to stay at a specific location $x$ will remain exactly at $x$ during the observation period $(0,T)$. This spatially homogeneous model may sound unrealistic from the application point of view, but asymptotic analysis of such scenario is considered much easier than that of the spatially inhomogeneous counterpart. In case of a constant infection rate $\beta$, the equilibria remain the same throughout $\Omega$. Another aim in this article is to see how the solution of the spatially inhomogeneous system behaves around the equilibria of the spatially homogeneous system. This is a specific issue where the so-called \textsc{Turing} instability analysis comes into play. 

Sometimes we are supplied with data that reflects the accumulation of incidences throughout $\Omega$ from time to time -- no more information regarding at which part of $\Omega$ gives what percentage in the accumulation. Unfortunately, integrating the system \eqref{eq:model} over $\Omega$ does not lead to a system of \textsc{ODE}s due to nonlinearity of the reaction term, i.e. the \textsc{H\"{o}lder} inequality might apply.  We propose to set the classical SIRS model adopted from the spatially homogeneous version of \eqref{eq:model} as to approach this type of data. Not only does this approach give an approximate value of the recovery rate $\gamma$, but it also helps to shed light on the typical procedures of parameter estimation for the \textsc{PDE} model. In case of a constant $\beta$, the problem reduces to the classical parameter estimation, which is mostly expressible as an inverse problem \cite{Boc1983i,WGS2015i}. 

\section{Elementary analysis}
\label{sec:analysis}

Before touching on the existence of a nonnegative weak solution, we shall recall several basic notations. Let $\bra{X,\norm{\cdot}_X}$ and $U$ be a \textsc{Banach} space and an open subset of $\R^m$ respectively, given either $m=1$ or $m=2$. In case $m=2$, we let $U$ be an open bounded \textsc{Lipschitz}ian domain. By $L^p(U)$, given $1\leq p\leq \infty$, we denote the space of all \textsc{Lebesgue} measurable functions $y:U\rightarrow \R$ satisfying $\norm{y}_{L^p(U)}<\infty$. The aforementioned norm is defined as $
\bra{\int_{U}\abs{y}^p\,\dd x}^{1\slash p}$ for $1\leq p<\infty$ and $\inf_{\abs{E}=0}\sup_{U\backslash E}\abs{y}$ for $p=\infty$. The \textsc{Sobolev} space of all functions $y\in L^p(U)$ with all the weak derivatives $D^{\alpha}y$ in $L^p(U)$ for all $\abs{\alpha}\leq k$ is denoted by $W^{k,p}(U)$. In a particular case for $p=2$, one often rewrites $W^{k,2}(U)=:H^k(U)$. The space $C^k(U;X)$ denotes the space of all functions $y:U\rightarrow X$ where $D^{\alpha}y$ for all $\abs{\alpha}\leq k$ are continuous on $U$. Particularly, $C^{\infty}(U)=\bigcap_{k=0}^{\infty}C^k(U)$ denotes the space of infinitely differentiable functions on $U$. Furthermore, we denote by $\bra{\cdot,\cdot}_H$ an inner product that endows a \textsc{Hilbert} space $H$, also by $y^+=\max(y,0)$ the positive part $y$.

We consider the evolution triple $V\hookrightarrow H\cong H^{\ast}\hookrightarrow V^{\ast}$ by identifying $H$ as a separable \textsc{Hilbert} space together with its dual $H^{\ast}$ via the \textsc{Riesz} isomorphism $h^{\ast}\sim \inner{h,\cdot}$, where $\inner{\cdot,\cdot}$ denotes the pairing between $H$ and $H^{\ast}$. The space $V$ is a reflexive \textsc{Banach} space such that it is compactly embedded into $H$, and $H$ is continuously embedded into $V^{\ast}$. In this paper we use $V=H^1(\Omega)$ and $H=L^2(\Omega)$, given $\Omega$ the domain of interest. We aim at guaranteeing the existence of a nonnegative weak solution of our model \eqref{eq:model} given the initial condition $y_{i,0}\in H$ for $i=1,2,3$. The notion of weak solution appears because $y_{i,0}\in H$ does not guarantee the existence of a classical solution of the model~\eqref{eq:model} throughout $\Omega$ from time to time. Furthermore, the mapping $V\rightarrow V:v\mapsto v^{+}$ is bounded, therefore $y_i\in W(0,T):=\{y\in L^2(0,T;V):\partial_t y\in L^2(0,T;V^{\ast})\}$ implies $y_i^+\in L^2(0,T;V)$. In addition to all biological assumptions \ref{asm:newborn}--\ref{asm:ade}, we extend the definition of the reaction term $(f_1,f_2,f_3)$ of the model~\eqref{eq:model} by giving the value zero to $y_1y_2\slash y$ whenever $y_1=0$ or $y_2=0$. Consequently, $(f_1,f_2,f_3)$ becomes continuously differentiable, and moreover, \textsc{Lipschitz} continuous.

\textbf{Weak solution}. A vector-valued function $(y_1,y_2,y_3)$ is called a \emph{weak solution} of the problem~\eqref{eq:model} if $y_i\in W(0,T)$ and
\begin{equation*}
\int_{\Omega}\partial_ty_i\,\varphi_i\,\dd x+d_i\int_{\Omega}\nabla y_i\cdot\nabla \varphi_i\,\dd x=\int_{\Omega}f_i\varphi_i\,\dd x
\end{equation*}
for almost all $t\in [0,T)$ and all $\varphi_i\in L^2(0,T;V)$, while it holds $y_i(0,\cdot)=y_{i,0}$ for $i=1,2,3$.


\begin{theorem}\label{thm:existee}
Let $\beta\in L^{\infty}(0,T)$ such that $\beta_{\min}\stackrel{\aev}{\leq}\beta\stackrel{\aev}{\leq} \beta_{\max}$ with $\beta_{\min},\beta_{\max}$ being in $L^{\infty}(0,T)$ and $\beta_{\min}\geq 0$. Then, the model \eqref{eq:model} admits a bounded weak solution. 
\end{theorem}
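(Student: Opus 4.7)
The plan is to combine a fixed-point existence argument with a~priori nonnegativity and cascading $L^{\infty}$ bounds, exploiting the global Lipschitz regularity of the extended reaction term $(f_1,f_2,f_3)$ noted just above the theorem. For existence, since each $f_i$ is globally Lipschitz in $(y_1,y_2,y_3)$ with a constant depending only on $\beta_{\max},\gamma,\kappa,\mu$, I would set up the map $\mathcal T:L^2(0,T;H)^3 \to L^2(0,T;H)^3$ sending $(\bar y_1,\bar y_2,\bar y_3)$ to $(y_1,y_2,y_3)$, where $y_i\in W(0,T)$ is the unique weak solution of the linear Neumann heat equation $\partial_t y_i - d_i\Delta y_i = f_i(\bar y_1,\bar y_2,\bar y_3,\beta)$ with initial datum $y_{i,0}$. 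Standard linear parabolic theory gives existence of each $y_i$ and continuity of $\mathcal T$; a contraction estimate on a short subinterval $[0,T_1]$ followed by concatenation up to $T$, or alternatively a Galerkin approximation combined with Aubin-Lions compactness, yields a weak solution on the whole interval $[0,T]$.

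Next, I would prove nonnegativity. Since $y_i\in W(0,T)$ and $v\mapsto v^{+}$ is bounded on $V$, the negative part $y_i^{-}\in L^2(0,T;V)$ is a valid test function. Testing the $i$-th weak equation with $-y_i^{-}$ and integrating by parts yields
\begin{equation*}
\tfrac12\,\tfrac{\dd}{\dd t}\norm{y_i^{-}}_H^2 + d_i\,\norm{\nabla y_i^{-}}_H^2 = -\int_{\Omega}f_i(\y)\,y_i^{-}\,\dd x.
\end{equation*}
The extended $f_i$ is quasi-positive, i.e. $f_i(\y^{+})\geq 0$ on $\set{y_i^{+}=0}$ with $\y^{+}=(y_1^{+},y_2^{+},y_3^{+})$; combined with Lipschitz continuity this gives $f_i(\y)\geq -L\sum_j y_j^{-}$ wherever $y_i<0$. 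Hence $\tfrac{\dd}{\dd t}\sum_i\norm{y_i^{-}}_H^2 \leq C\sum_i\norm{y_i^{-}}_H^2$, and the biologically natural assumption $y_{i,0}\geq 0$ together with Gronwall forces $y_i^{-}\equiv 0$, so $y_i\geq 0$ almost everywhere on $Q$.

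For the $L^{\infty}$ bound I would proceed in a cascade exploiting the triangular structure of the reaction term under nonnegativity. Since $y_1y_2/y\leq y_2$, the $y_2$-equation satisfies $\partial_t y_2 - d_2\Delta y_2 \leq (\beta_{\max}-\gamma-\mu)\,y_2$ in $Q$, so the Neumann parabolic comparison principle (or a Stampacchia truncation tested against $(y_2-k)^{+}$) yields $\norm{y_2(t,\cdot)}_{L^{\infty}(\Omega)} \leq \norm{y_{2,0}}_{L^{\infty}(\Omega)}\,e^{\beta_{\max} t}$. Next, the $y_3$-equation is a linear Neumann heat equation with $L^{\infty}$-source $\gamma y_2$ and dissipative zeroth-order term $-(\kappa+\mu)y_3$, hence $y_3\in L^{\infty}(Q)$. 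Finally, the $y_1$-equation gives $\partial_t y_1 - d_1\Delta y_1 \leq \mu\, y_1 + (\mu+\kappa)\bra{\norm{y_2}_{L^{\infty}(Q)}+\norm{y_3}_{L^{\infty}(Q)}}$, and another comparison delivers boundedness of $y_1$.

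The main technical obstacle is the $L^{\infty}$ estimation when $d_1,d_2,d_3$ are not equal: one cannot simply sum the three equations and apply the linear maximum principle to $y=y_1+y_2+y_3$. The triangular ordering $y_2\to y_3\to y_1$ above sidesteps this by decoupling the comparisons one component at a time, using the fact that the only self-term coupling in the $y_2$-equation is $\beta y_1 y_2/y \leq \beta y_2$ once nonnegativity is available. The earlier extension of the singular term $\beta y_1y_2/y$ by zero on $\set{y_1=0}\cup\set{y_2=0}$ is precisely what makes the Lipschitz-based a~priori estimates and the quasi-positivity argument work throughout.
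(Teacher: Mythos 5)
Your existence argument takes a genuinely different route from the paper's. The paper runs a classical Galerkin scheme: it solves the Neumann eigenvalue problem for $-\Delta$, projects onto finitely many eigenfunctions, gets an ODE system for the coefficients (Peano plus the extension theorem, using the affine-linear bound on the right-hand side), derives energy bounds on $y_{i,n}$, $\partial_t y_{i,n}$, $\nabla y_{i,n}$ and the reaction terms, and passes to the limit via weak compactness, Aubin--Lions and the Lipschitz continuity of $F$. Your Banach fixed-point iteration on the linearized Neumann heat equations, with contraction on a short interval and concatenation, is a legitimate alternative that exploits exactly the same global Lipschitz property of the extended reaction term; it is arguably shorter, at the price of leaning on linear parabolic well-posedness as a black box, whereas the Galerkin route is self-contained and produces the energy estimates that the paper then reuses in Theorem~\ref{thm:existeeoptee}. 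Note also that your nonnegativity step (testing with $-y_i^-$, quasi-positivity, Gronwall) is not part of this theorem in the paper: it is the content of Theorem~\ref{thm:positiveee}, which additionally assumes $y_{i,0}\geq 0$.

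There is, however, a genuine mismatch in the boundedness part. The theorem only assumes $y_{i,0}\in H=L^2(\Omega)$, and the paper's ``bounded weak solution'' means bounded in the energy space: the proof concludes from the uniform Galerkin estimates that $(y_i)_{i=1}^3$ is bounded in $L^2(0,T;V)$ with $\partial_t y_i$ bounded in $L^2(0,T;V^{\ast})$. Your cascade $y_2\to y_3\to y_1$ proves the much stronger statement $y_i\in L^{\infty}(Q)$, but it cannot start from the stated hypotheses: the comparison $\norm{y_2(t,\cdot)}_{L^{\infty}(\Omega)}\leq\norm{y_{2,0}}_{L^{\infty}(\Omega)}e^{\beta_{\max}t}$ needs $y_{2,0}\in L^{\infty}(\Omega)$, and the whole cascade needs nonnegativity, hence $y_{i,0}\geq 0$ --- neither of which is assumed here. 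With only $L^2$ data the constant in a Stampacchia/De Giorgi truncation degenerates as $t\to 0^+$, so at best you get $L^{\infty}$ bounds on $(\varepsilon,T)\times\Omega$. The paper is aware of this and states the uniform pointwise bound $0\leq y_i\leq M$ only afterwards, as a separate remark invoking the parabolic maximum principle for classical solutions with $C^{2+r}(\overline{\Omega})$ nonnegative initial data. To match the theorem as stated, replace your $L^{\infty}$ cascade by the standard energy estimate: test the $i$-th equation with $y_i$, absorb the reaction term via the Lipschitz bound, and apply Gronwall to obtain $Y\in W(0,T)$ with norm controlled by $\norm{y_{i,0}}_H$; reserve the cascade for the smoother-data setting.
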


Classical approaches for the proof are twofold: \textsc{Galerkin} approximation \cite{Zei1990i,Wlo1987i} and semi-group approach \cite{HP1957i}. However, their precise presentation usually incorporates many technical details. Here, we  put forward the only outline of the classical \textsc{Galerkin} approximation along with several supporting notions  for keeping the reader on track. Similar and more detailed proofs can be looked at e.g. \cite[Theorem~7.8, pp.367--372]{Tro2010i} and \cite{BL2010i}, also that for the problem with \textsc{Dirichlet} boundary condition at \cite[Theorem~3, p.378--379]{Eva2010i}. 

\begin{proof}[Extended sketch for the proof of Theorem~\ref{thm:existee}]
Firstly, we solve the spectral problem as a weak formulation of the eigenvalue problem $-\Delta \varphi=\lambda\varphi$ in $\Omega$ via test functions in $V$, equipped by the homogeneous \textsc{Neumann} boundary condition. Without loss of generality, we may define $\Omega=(0,a)\times (0,b)$. The corresponding eigenvalues as well as the eigenvectors $(\lambda_l,\varphi_l)_{l\geq0}$ are given by 
\begin{align*}
\lambda_l &= \lambda_{j,k}=\pi^2\left(\left(\frac{j}{a}\right)^2+\left(\frac{k}{b}\right)^2\right),\\
\varphi_l(x) &=\varphi_{j,k}(x_1,x_2)=\frac{jk\pi^2}{ab\sin(j\pi)\sin(k\pi)}\cos\left(\frac{j\pi x_1}{a}\right)\cos\left(\frac{k\pi x_2}{b}\right),
\end{align*}
where $j,k=0,1,2,\cdots$, excluding $j=k=0$. Now, one has to assign a careful enumeration. In this case, $j$ and $k$ are organized in such a way that $(\lambda_l)_{l\geq 0}$ is increasing i.e. by unique representation of eigenvalues. As for the trivial eigenvalue $\lambda_0=0$, i.e. where $j=k=0$, the corresponding eigenfunction $\varphi_0$ is known to be constant due to the \textsc{Neumann} boundary condition. Since $\bra{ \varphi_i,\varphi_j}_{L^2(\Omega)}=\delta_{ij}$, where $\delta_{ij}$ denotes the usual \textsc{Kronecker} delta, all these eigenfunctions $(\varphi_l)_{l\geq 0}$ are complete in the $L^2$ sense. 

Secondly, we check if the approximate solution using a finite set of basis eigenfunctions, i.e. $y_{i,n}(t,x)=\sum_{l=0}^ns_{i,n,l}(t)\varphi_l(x)$ for $i=1,2,3$, solves the problem~\eqref{eq:model} in the weak sense. This requires $y_{i,n}$ or $s_{i,n,l}$ to not blow up for all $n\geq 0$. Therefore, a check on regularity of $s_{i,n,l}$ is one of the important key steps in the proof. To see this, we use the test functions $\varphi_k$ for $k\in\set{0,\cdots,n}$ in the weak formulation of~\eqref{eq:model} with $y_i$ replaced by $y_{i,n}$ to obtain the \textsc{ODE} for $s_{i,n,k}$:
\begin{equation}\label{eq:ss}
\dd_ts_{i,n,k} = -d_i\int_{\Omega}\nabla y_{i,n}\cdot\nabla\varphi_k\,\dd x-\int_{\Omega}f_i(y_{1,n},y_{2,n},y_{3,n})\varphi_k\,\dd x.
\end{equation}
The last attempt is well-defined since $\bra{\varphi_l}_{l\geq 0}$ also sets an orthogonal basis in $V$. The initial condition is set up by defining $y_{i,n,0}\rightarrow y_{i,0}$ as $n\rightarrow\infty$, i.e. $s_{i,n,k}(0)=\bra{y_{i,n,0},\varphi_k}_{L^2(\Omega)}$. The fact that the right--hand side (rhs) of \eqref{eq:ss} is affine linear with respect to $(s_{i,n,l})_{i,l=0,\cdots,n}$ allows the standard \textsc{Peano} Theorem to guarantee the existence of a solution on a subset, say $[0,T_-)\subset [0,T)$. Moreover, using $y_{i,n}$ as the test function in the weak formulation returns in the boundedness of: (1) $\norm{\partial_ty_{i,n}}_{L^2(0,T_-)}$, (2) $\norm{y_{i,n}}_{L^2(0,T_-)}$, (3) $\norm{\nabla y_{i,n}}_{L^2([0,T_-)\times\Omega)}$, (4) the reaction terms $\norm{f_i(y_{1,n},y_{2,n},y_{3,n})}_{L^2([0,T_-)\times\Omega)}$ by $\beta_{\min}\stackrel{\aev}{\leq}\beta\stackrel{\aev}{\leq}\beta_{\max}$ and (5) the rhs of \eqref{eq:ss} with respect to $y_{1,n},y_{2,n}$ and $y_{3,n}$. Due to the boundedness and affine linearity of the rhs of \eqref{eq:ss} with respect to $(s_{i,n,l})_{i,l=0,\cdots,n}$ (\textsc{Carath\'{e}odory}), the standard Extension Theorem for \textsc{ODE}s guarantees the existence of continuous $(s_{i,n,k})_{k=0}^n$ solving \eqref{eq:ss} on a larger domain $[0,T_-+\delta)$, $\delta>0$. This process can be repeated over and over to guarantee the existence of continuous $(s_{i,n,k})_{k=0}^n$ on $[0,T)$ as well as the boundedness of the aforementioned five items. 

Thirdly, within bounded sequences $(y_{i,n})_{n\geq 0}$ in $L^2\bra{0,T;V}$ and $(\partial_ty_{i,n})_{n\geq 0}$ in $L^2\bra{0,T;V^{\ast}}$, we are concentrating on one limiting functional $y_{i}$ to which a sub-sequence $(y_{i,n_j})_{n_j\geq0}$ in $(y_{i,n})_{n\geq 0}$ converges. Replacing $n_j$ with $n$, a standard convergence theorem for reflexive \textsc{Banach} spaces (see e.g. \cite[pp.723--724]{Eva2010i}) guarantees that $y_{i,n}\rightharpoonup y_i$, $\partial_ty_{i,n}\rightharpoonup \partial y_i$, $\nabla y_{i,n}\rightharpoonup \nabla y_{i}$ in $L^2(0,T)$, $L^2(0,T)$, and $L^2(Q)$ respectively. By means of \textsc{Aubin--Lions} Lemma, $W(0,T)$ is compactly embedded in $L^2(0,T;H)$, leading to the strong convergence $y_{i,n}\rightarrow y_i$ in $L^2(Q)$. Then, it also holds $f_i(y_{1,n},y_{2,n},y_{3,n})\rightarrow f_i(y_1,y_2,y_3)$ in $L^2(Q)$ due to the \textsc{Lipschitz} continuity. 

Finally, using any arbitrary test function $\varphi_i\in L^2(0,T;V)$ in the weak formulation and setting $n\rightarrow \infty$, we discover that the limiting functional $y_{i}$ is a weak solution of \eqref{eq:model} for $i=1,2,3$. Boundedness follows from the weak convergence of $(y_{i,n})_{n\geq 0}$ and $(\partial_ty_{i,n})_{n\geq 0}$, which guarantees $(y_i)_{i=1}^3$ bounded in $L^2(0,T;V)$. Moreover, a similar idea as in \cite[p.379]{Eva2010i} that was also presented in \cite[Lemma~3.6]{BL2010i} settles the specific initial value problem if $y_i(0,\cdot)=y_{i,0}$ as given in \eqref{eq:init}.
\end{proof}

\begin{theorem}\label{thm:positiveee}
Let $\beta\in L^{\infty}(0,T)$ such that $\beta_{\min}\stackrel{\aev}{\leq}\beta\stackrel{\aev}{\leq} \beta_{\max}$ where $\beta_{\min}\geq 0$ and the initial condition $y_{i,0}\in L^2(\Omega)$ be nonnegative for $i=1,2,3$. Then, the existing weak solution of \eqref{eq:model} is unique and nonnegative.
\end{theorem}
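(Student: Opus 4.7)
The plan is to handle nonnegativity and uniqueness separately, both via the standard energy method: test against a carefully chosen function, estimate the reaction contribution, and conclude with \textsc{Gr\"onwall}'s inequality. Throughout I would rely on the fact that for $y_i\in W(0,T)$ both the negative part $y_i^-$ and the difference of two solutions belong to $L^2(0,T;V)$, hence are admissible as test functions in the weak formulation.

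For nonnegativity, I would test the $i$-th equation of \eqref{eq:model} with $-y_i^-$. The standard chain-rule identities in $W(0,T)$ give
\begin{equation*}
\inner{\partial_t y_i,-y_i^-} = \frac{1}{2}\frac{\dd}{\dd t}\norm{y_i^-}_{L^2(\Omega)}^2,\qquad \int_{\Omega}\nabla y_i\cdot\nabla(-y_i^-)\,\dd x = \norm{\nabla y_i^-}_{L^2(\Omega)}^2,
\end{equation*}
so that, using $y_{i,0}^-=0$ and integrating on $(0,t)$,
\begin{equation*}
\frac{1}{2}\norm{y_i^-(t)}_{L^2(\Omega)}^2 + d_i\int_0^t\norm{\nabla y_i^-}_{L^2(\Omega)}^2\,\dd s = -\int_0^t\int_{\Omega}f_i(y_1,y_2,y_3)\,y_i^-\,\dd x\,\dd s.
\end{equation*}
The essential structural input is the \emph{quasi-positivity} of the reaction on the nonnegative cone: direct evaluation of the extended $f_i$ on each boundary face yields $f_1(0,y_2,y_3)=\mu y_2+(\mu+\kappa)y_3\geq 0$, $f_2(y_1,0,y_3)=0$, and $f_3(y_1,y_2,0)=\gamma y_2\geq 0$ whenever the remaining arguments are nonnegative. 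Combined with the \textsc{Lipschitz} continuity of the $f_i$, this should yield a pointwise estimate of the form $-f_i\,y_i^-\leq C\sum_{j=1}^3 (y_j^-)^2$ a.e.\ in $Q$. Summing over $i$ and invoking \textsc{Gr\"onwall}'s inequality on $\sum_i\norm{y_i^-}_{L^2(\Omega)}^2$ then forces $y_i^-\equiv 0$, i.e.\ $y_i\stackrel{\aev}{\geq}0$.

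For uniqueness, let $(y_1,y_2,y_3)$ and $(\tilde y_1,\tilde y_2,\tilde y_3)$ be two weak solutions with the same data, set $w_i:=y_i-\tilde y_i$, and test the difference equation for index $i$ against $w_i$ to obtain
\begin{equation*}
\frac{1}{2}\frac{\dd}{\dd t}\norm{w_i}_{L^2(\Omega)}^2 + d_i\norm{\nabla w_i}_{L^2(\Omega)}^2 = \int_{\Omega}\bra{f_i(y)-f_i(\tilde y)}w_i\,\dd x.
\end{equation*}
By the \textsc{Lipschitz} property of $f_i$ the right-hand side is bounded by $L\sum_j\norm{w_j}_{L^2(\Omega)}\norm{w_i}_{L^2(\Omega)}$. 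Summation over $i$, Young's inequality, and the common initial data $w_i(0,\cdot)=0$ then produce $\sum_i\norm{w_i(t)}_{L^2(\Omega)}^2\equiv 0$ through \textsc{Gr\"onwall}, so the two solutions coincide.

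The delicate step is the pointwise nonnegativity estimate, because the infection nonlinearity $\beta y_1 y_2/y$ may a priori be large or sign-indefinite once some component dips below zero. The extension specified in the excerpt only prescribes values at $y_1=0$ or $y_2=0$; to make the bound $-f_i y_i^-\leq C\sum_j(y_j^-)^2$ fully rigorous one must either (i) run the estimate at the \textsc{Galerkin} level, where uniform-in-$n$ bounds from the existence proof allow a controlled passage to the limit, or (ii) adopt the natural equivalent extension $\beta y_1 y_2/y \mapsto \beta y_1^+ y_2^+/(y_1^+ + y_2^+ + y_3^+)$ (with $0/0:=0$), which is globally bounded and \textsc{Lipschitz}, agrees with the original nonlinearity on the nonnegative cone, and slots directly into the $(y_j^-)^2$-structure above. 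Once nonnegativity has been secured, the uniqueness argument of the preceding paragraph applies to the original model unchanged.
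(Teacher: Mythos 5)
Your proposal is correct and rests on the same basic device as the paper's proof -- testing the weak formulation with $-y_i^-$ for nonnegativity and with an energy functional for uniqueness -- but the execution differs in two substantive ways, and in both you are the more careful party. For uniqueness, the paper only argues that the homogeneous problem with $(f_1,f_2,f_3)=0$ and zero data has the zero solution, which silently drops the difference of the reaction terms; your version, which tests the difference $w_i=y_i-\tilde y_i$ against itself and absorbs the reaction contribution via the \textsc{Lipschitz} bound, Young's inequality and \textsc{Gr\"onwall}, is the complete form of that argument. For nonnegativity, the paper invests its effort in justifying $-y_{i}^-$ as a test function through smooth approximation in $W(0,T)$ (following \textsc{Wachsmuth}) and then asserts that the summed reaction contribution equals exactly $-(\gamma+\mu)\norm{y_2^-}^2_{L^2}-(\mu+\kappa)\norm{y_3^-}^2_{L^2}\leq 0$; that exact identity is not transparent, since cross terms such as $-\mu\int_{\Omega}(y_2+y_3)\,y_1^-\,\dd x$ do not vanish without already knowing the signs of $y_2,y_3$. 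Your route through quasi-positivity on the nonnegative cone plus the \textsc{Lipschitz} estimate $-f_i\,y_i^-\leq C\sum_j(y_j^-)^2$ and \textsc{Gr\"onwall} is the standard robust way to close this step, and your explicit flagging of the fact that $\beta y_1y_2/y$ is only controlled on the nonnegative cone -- together with the two remedies you offer (Galerkin-level estimate or a positive-part modification of the nonlinearity) -- addresses a genuine gap that the paper leaves untreated, since its extension of $y_1y_2/y$ is prescribed only at $y_1=0$ or $y_2=0$ and the function is not globally \textsc{Lipschitz} near $y=0$ with mixed signs. The only thing you pass over lightly that the paper does in detail is the justification of the chain-rule identities for $y_i^-$ in $W(0,T)$; citing the approximation argument (or \cite{Wac2016i}) there would make your proof fully self-contained.
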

\begin{proof}
Uniqueness follows from the idea of proving that the solution of \eqref{eq:model} for $(f_1,f_2,f_3)=0$ and $(y_{1,0},y_{2,0},y_{3,0})=0$ is $0$. Setting $\varphi_i:=y_i\in W(0,T)$ as the test function in the weak formulation, we have in the sum
\begin{align*}
\sum_{i=1}^3\dd_t\frac{\norm{y_i}^2_H}{2}+\sum_{i=1}^3d_i\int_{\Omega}\abs{\nabla y_i}^2\,\dd x=0,
\end{align*}
which gives us $\dd_t\norm{y_i}^2_H\leq 0$ for $i=1,2,3$. Then, the zero solution is settled by considering the zero initial condition.

As for nonnegativity, we follow the basic idea from \textsc{Dautray} and \textsc{Lions} in \cite[Theorem~2, p.534]{DL1992i} and use some brief exploration of the positive part of a function as in~\cite{Wac2016i}. Given $y_i\in W(0,T)$, we know that $y_i^+\in L^2(0,T;V)$. Owing to the denseness of $C^{\infty}(0,T;V)$ in $W(0,T)$ we guarantee the existence of a sequence $(y_{i,n})_{n\geq 0}\in C^{\infty}(0,T;V)$ such that $y_{i,n}\rightarrow y_i\in W(0,T)$. Moreover, the mapping $y_i\mapsto y_i^+$ is continuous in $y_i$, therefore $y_{i,n}^+\rightarrow y_i^+$ in $C(0,T;H)$. By definition, the sequence $(y^+_{i,n})_{n\geq 0}$ is bounded in $L^2(0,T;V)$, therefore $y^+_{i,n_j}\rightharpoonup y^+_i$ in $L^2(0,T;V)$ for some sub-sequence $(y^+_{i,n_j})_{n_j\geq 0}$. Since $y_{i,n}^+\rightarrow y_i^+$ in $C(0,T;H)$, then the whole sequence $y_{i,n}^+\rightharpoonup y_i^+$ in $L^2(0,T;V)$. We also know that $(\partial_ty_{i,n})_{n\geq 0}\in C^{\infty}(0,T;V^{\ast})$. Then, the similar way as before leads us to the conclusion that $\partial_ty^+_{i,n}\rightharpoonup \partial_ty^+_{i}$ in $L^2(0,T;V^{\ast})$. Using the boundedness of both the function and derivative and assigning the test function $y_{i,n}^+$ in the weak formulation with $y_i$ replaced by $y_{i,n}$, we also obtained the boundedness of $\nabla y_{i,n}^+$ in $L^2(Q)$. Consequently, $\nabla y_{i,n}^+\rightharpoonup \nabla y_{i}^+$ in $L^2(Q)$. 

The negative part $y_i^-$ is nothing else but $(-y_i)^+$. At the expense of using the test function $(-y_{i,n}^-)$ in the weak formulation we obtain
\begin{align*}
\int_{\Omega}\partial_t y_{i,n}(-y_{i,n}^-)\,\dd x&=\int_{\Omega}\partial_ty_{i,n}(-(-y_{i,n})^+)\,\dd x=\int_{\Omega}\partial_t(-y_{i,n})(-y_{i,n})^+\,\dd x\\
&=\int_{\Omega}\partial_t(-y_{i,n})^+(-y_{i,n})^+\,\dd x=\dd_t\int_{\Omega}\frac{\abs{(-y_{i,n})^+}^2}{2}\,\dd x\\
&=\dd_t\frac{\norm{y_{i,n}^-}^2_H}{2}.
\end{align*}
Taking $n\rightarrow \infty$, it holds $\int_{\Omega}\partial_t y_{i}(-y_{i}^-)\,\dd x=\dd_t\norm{y_{i}^-}^2_H\slash 2$. Now set $\varphi_{i,n}:=-y_{i,n}^-$ as a test function, which clearly lies in $L^2(0,T;V)\cap C(0,T;H)$. Multiplying each of the handling equations in \eqref{eq:model} with $\varphi_{i,n}$ where $y_{i}$ is replaced by $y_{i,n}$, integrating over $\Omega$, then taking $n\rightarrow \infty$ we obtain in the sum
\begin{align*}
&\sum_{i=1}^3\dd_t\frac{\norm{y_i^-}^2_H}{2}+\sum_{i=1}^3d_i\int_{\Omega}\abs{\nabla y_i^-}^2\,\dd x\\
&=-(\gamma+\mu)\int_{\Omega}\abs{y_2^-}^2\,\dd x-(\mu+\kappa)\int_{\Omega}\abs{y_3^-}^2\,\dd x\leq0,
\end{align*}
which gives us $\sum_{i=1}^3\dd_t\frac{\norm{y_i^-}^2_H}{2}\leq0$. The specified initial condition supplements with $y_i^-(0,\cdot)=0$ for all $i=1,2,3$, leading to $y_i^-(t,\cdot)=0$ for $t> 0$. 
\end{proof}

By the maximum principle semilinear parabolic systems (see for similar discussions in \cite{Hen1981i,PW1984i,BL2010i} and references therein) and \textsc{Lipschitz} continuity of the reaction term, for any given $d_1,d_2,d_3>0$ and nonnegative $(y_{1,0},y_{2,0},y_{3,0})\in \left(C^{2+r}(\overline{\Omega})\right)^3$ where $r\in (0,1)$ there exists an $M<\infty$ such that the system \eqref{eq:model} has a unique classical solution $(y_1,y_2,y_3)\in \left(C^{\frac{2+r}{2},2+r}([0,\infty)\times\overline{\Omega})\right)^3$ satisfying 
\begin{equation*}
0\leq y_1(t,x),y_2(t,x),y_3(t,x)\leq M
\end{equation*}
for all $t>0$ and $x\in\overline{\Omega}$.

\subsection{\textsc{Turing} instability analysis}
Let us from now on denote $Y:=(y_1,y_2,y_3)$, $F:=(f_1,f_2,f_3)$, and $D=\diag\bra{d_1,d_2,d_3}$. Consider the spatially homogeneous version of \eqref{eq:model}
\begin{equation}\label{eq:homogen}
\frac{\partial Y}{\partial t}=F(Y),
\end{equation}
where all the parameters are assumed to be constant. For the sake of well-posedness of both spatially homogeneous and inhomogeneous systems, let us assume that the initial condition $Y_0$ is smooth enough (at least $C^2$) on $\overline{\Omega}$ and a classical solution for the spatially inhomogeneous system exists on $(0,\infty]$. Since $F$ is autonomous, the equilibria of \eqref{eq:homogen} can be derived by the straightforward calculation, returning the \emph{disease-free equilibrium} and the \emph{endemic equilibrium}
\begin{align}
Y^D& :=  (y_0,0,0),\label{eq:dfe}\\
Y^E& :=  \left(\underbrace{y_0\frac{(\gamma+\mu)}{\beta}}_{y_1^E},\underbrace{y_0\frac{(\kappa+\mu)(\beta-\gamma-\mu)}{\beta(\gamma+\mu+\kappa)}}_{y_2^E},\underbrace{y_0\frac{\gamma(\beta-\gamma-\mu)}{\beta(\gamma+\mu+\kappa)}}_{y_3^E}\right),\label{eq:end}
\end{align}
where $y_0=y_1^E+y_2^E+y_3^E=y(0,x)$ for any $x\in\Omega$. 

We attempt to study, under what conditions the aforementioned equilibria are locally (asymptotically) stable in the sense that the solutions for the non-homogeneous system stay close (converge) to them in the long run. As for the moment, we let $Y^S$ be a locally stable equilibrium of the system \eqref{eq:homogen} and initiate some abstraction. The \textsc{Turing} instability analysis aims to study how stable is the solution $Y$ of the spatially inhomogeneous system \eqref{eq:model} relative to $Y^S$. We denote by $Z :=  Y-Y^S$ the corresponding error between both terms. Moreover, $Z$ obeys the following equation in the linearization mode
\begin{equation*}
\partial_t Z - D\Delta Z = \partial_Y F\bra{Y^S}Z
\end{equation*}
together with the usual homogeneous \textsc{Neumann} boundary condition. Owing to the previous result on the eigenvalue problem $-\Delta\varphi=\lambda\varphi$, we know that the eigenfunctions $(\varphi_l)_{l\geq 0}$ sets an orthonormal basis in $L^2(\Omega)$ and an orthogonal basis in $V$. Therefore, for any $Z\in L^2(0,T)$, there exist $(s_l)_{l\geq0}$ such that $Z(t,x)=\sum_{l=0}^{\infty}s_l(t)\varphi_l(x)$. Putting back this specification to the very last equation of $Z$ we obtain the linear equations
\begin{equation*}
\dd_t s_l=\left(-\lambda_lD+\partial_Y F\bra{Y^S}\right)s_l=: A_ls_l,\quad l=0,1,2,\cdots
\end{equation*}
By the \textsc{Lyapunov} stability of the zero error $Z=0$ we have to ensure that all $(s_l)_{l\geq0}$ converges to constants nearby zero as $t\rightarrow\infty$, where it can happen if and only if all eigenvalues of $A_0,A_1,A_2,\cdots$ have non-positive real part.  

\textbf{Disease-free equilibrium}. The \textsc{Jacobi}an of the vector field $F$ evaluated at the disease-free equilibrium $Y^D$ is given by
\begin{equation*}
\partial_Y F\bra{Y^D}=\left(\begin{array}{ccc}
-\mu & - \beta & \kappa\\
0 & \beta -\gamma-\mu & 0\\
0 & \gamma & -\kappa-\mu
\end{array}
\right)
\end{equation*}
whose eigenvalues are $-\mu,\beta-\mu-\gamma,-\kappa-\mu$. This finding suggests that $Y^D$ is locally asymptotically stable if $\beta-\gamma-\mu< 0$ or $\frac{\beta}{\gamma+\mu}< 1$. The eigenvalues of the matrix $A_l=-\lambda_l D+\partial_Y F\bra{Y^D}$ are given by
\begin{equation*}
-\lambda_ld_1-\mu,\,\beta-\gamma-\mu-\lambda_ld_2,\,\text{and }-\kappa-\mu-\lambda_ld_3.
\end{equation*}
As we keep the condition $\frac{\beta}{\gamma+\mu}< 1$, all the eigenvalues remain negative. We immediately arrive at the following summary.
\begin{lemma}
The solution of the spatially inhomogeneous system \eqref{eq:model} remains stable around the disease-free equilibrium $Y^D$ providing that $\frac{\beta}{\gamma+\mu}< 1$.
\end{lemma}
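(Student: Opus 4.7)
The plan is to apply the Turing framework set up in the paragraph preceding the lemma to the specific Jacobian $\partial_Y F(Y^D)$. The task reduces to verifying that, under the hypothesis $\beta/(\gamma+\mu)<1$, every matrix $A_l = -\lambda_l D + \partial_Y F(Y^D)$, $l\geq 0$, has only eigenvalues with strictly negative real part; then the equivalence quoted just above the lemma, namely that Lyapunov stability of $Z=0$ is equivalent to the spectra of all $A_l$ lying in the closed left half-plane, closes the argument.

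First I would exploit the block-triangular structure of $A_l$. Because $\partial_Y F(Y^D)$ has vanishing $(2,1)$, $(3,1)$ and $(2,3)$ entries and $-\lambda_l D$ is diagonal, the characteristic polynomial of $A_l$ factors as $(-\mu-\lambda_l d_1-\lambda)$ times the determinant of a lower-triangular $2\times 2$ block. The three eigenvalues therefore coincide with the diagonal entries $-\mu-\lambda_l d_1,\;\beta-\gamma-\mu-\lambda_l d_2,\;-\kappa-\mu-\lambda_l d_3$ already displayed above.

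Second I would check the signs. Since $\mu,\kappa>0$ and $\lambda_l,d_i\geq 0$, the first and third eigenvalues are manifestly strictly negative. For the middle one the hypothesis $\beta<\gamma+\mu$ gives $\beta-\gamma-\mu<0$, and subtracting $\lambda_l d_2\geq 0$ only decreases it further. Hence all three eigenvalues of $A_l$ are strictly negative for every $l\geq 0$, and in fact they are bounded above uniformly in $l$ by the negative constant $\max\{-\mu,\beta-\gamma-\mu,-\kappa-\mu\}$.

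The step I expect to be the main obstacle is the transfer from mode-wise spectral information to genuine Lyapunov stability of the PDE solution near $Y^D$. In finite dimensions the passage is immediate, but in the infinite-dimensional Neumann eigenfunction expansion one needs a spectral gap that is uniform in $l$ so that Parseval converts mode-wise exponential decay of $s_l$ into $L^2$-decay of $Z$; the uniform upper bound on the spectra of $A_l$ obtained in the previous step supplies exactly this. The final passage from the linearized equation back to nonlinear stability of \eqref{eq:model} in a neighborhood of $Y^D$ rests on the standard principle of linearized stability for semilinear parabolic systems, and can be invoked using the classical solution theory recorded just after Theorem~\ref{thm:positiveee}.
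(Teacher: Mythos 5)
Your argument is correct and follows the same route as the paper: read off the eigenvalues of each $A_l=-\lambda_l D+\partial_Y F(Y^D)$ from its triangular structure and observe that the condition $\beta/(\gamma+\mu)<1$ keeps all of them strictly negative for every $l$. In fact you are somewhat more careful than the paper, which stops at ``all the eigenvalues remain negative'' and does not comment on the uniform spectral gap needed to pass from mode-wise decay to decay of $Z$, nor on the linearized-to-nonlinear stability step; your added remarks fill gaps the paper leaves implicit rather than diverging from its method.
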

The parameter $\frac{\beta}{\gamma+\mu}$ is what is known from the spatially homogeneous SIRS model as the \emph{basic reproductive number}.

\textbf{Endemic equilibrium}. A glimpse over the formulation of the endemic equilibrium $Y^E$ in~\eqref{eq:end} shows that it is biologically meaningful as the basic reproductive number $\frac{\beta}{\gamma+\mu}> 1$. We henceforth keep this condition in mind. The \textsc{Jacobi}an of the vector field $F$ evaluated at $Y^E$ is given by
\begin{equation*}
\partial_Y F\bra{Y^E}=\left(\begin{array}{ccc}
-\mu-p & -\gamma-\mu & \kappa\\
p & 0 & 0\\
0 & \gamma & -\kappa-\mu
\end{array}
\right),
\end{equation*}
where $p=(\beta-\gamma-\mu) (\kappa+\mu)\slash (\gamma+\kappa+\mu)$. The characteristic polynomial of the \textsc{Jacobi}an can immediately be derived as
\begin{equation*}
\p(\eta) = {\eta}^{3}+ \left( \kappa+2\mu+p \right) {\eta}^{2}+ \left( \gamma
p+\kappa\mu+p\kappa+{\mu}^{2}+2p\mu \right) \eta+p\mu \left( 
\gamma+\kappa+\mu \right).
\end{equation*}
One sufficient condition such that $Y^E$ is locally asymptotically stable is then $p>0$, which agrees with $\frac{\beta}{\gamma+\mu}> 1$. Therefore no further any condition is concerned at this point. Without writing in detail, the characteristic polynomial of the matrix $A_l=-\lambda_l D+\partial_Y F\bra{Y^E}$ shows to have positive coefficients providing that $p>0$. Therefore, we have the following summary.
\begin{lemma}
The solution of the spatially inhomogeneous system \eqref{eq:model} remains stable around the endemic equilibrium $Y^E$ providing that the basic reproductive number $\frac{\beta}{\gamma+\mu}> 1$.
\end{lemma}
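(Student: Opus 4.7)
Following the abstraction already developed, local asymptotic stability of $Y^E$ for the spatially inhomogeneous system reduces to showing that every matrix
\[
A_l := -\lambda_l D + \partial_Y F(Y^E), \qquad l = 0, 1, 2, \ldots,
\]
has its spectrum in the open left half-plane. Each $A_l$ is $3\times 3$, so my plan is to apply the Routh--Hurwitz criterion to the associated monic cubic $\eta^3 + a_1^{(l)}\eta^2 + a_2^{(l)}\eta + a_3^{(l)}$ and verify the three positivities together with the trace condition $a_1^{(l)} a_2^{(l)} > a_3^{(l)}$, using throughout the standing hypothesis $p > 0$, which coincides with $\beta/(\gamma+\mu) > 1$. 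The $l=0$ polynomial displayed in the excerpt is recovered from the general one by setting $\lambda_0 = 0$, so I would dispose of both cases at once by computing the general characteristic polynomial.

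Introducing the shorthand $\alpha := \mu + p + \lambda_l d_1$, $\delta := \lambda_l d_2$, $\epsilon := \kappa + \mu + \lambda_l d_3$ (so $\alpha,\epsilon > 0$ and $\delta \geq 0$), a direct cofactor expansion gives
\[
\p_l(\eta) = \eta^3 + (\alpha+\delta+\epsilon)\,\eta^2 + \bigl(\alpha\delta+\alpha\epsilon+\delta\epsilon+(\gamma+\mu)p\bigr)\,\eta + \alpha\delta\epsilon + (\gamma+\mu)p\epsilon - \kappa\gamma p.
\]
The coefficients $a_1^{(l)} = \alpha+\delta+\epsilon$ and $a_2^{(l)} = \alpha\delta+\alpha\epsilon+\delta\epsilon+(\gamma+\mu)p$ are manifestly positive. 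For $a_3^{(l)}$ I would substitute the definition of $\epsilon$ into $(\gamma+\mu)p\epsilon - \kappa\gamma p$; after the $\kappa\gamma p$ cancellation the combination reduces to $p\bigl[\mu(\gamma+\kappa+\mu)+(\gamma+\mu)\lambda_l d_3\bigr]$, giving $a_3^{(l)} = \alpha\delta\epsilon + p\bigl[\mu(\gamma+\kappa+\mu)+(\gamma+\mu)\lambda_l d_3\bigr] > 0$.

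The step I expect to be the most computationally painful is verifying $a_1^{(l)} a_2^{(l)} > a_3^{(l)}$ for every $l$, since the algebra involves six parameters with no obvious structural simplification. Expanding $(\alpha+\delta+\epsilon)(\alpha\delta+\alpha\epsilon+\delta\epsilon)$ produces three copies of $\alpha\delta\epsilon$, and $(\alpha+\delta+\epsilon)(\gamma+\mu)p$ produces $\epsilon(\gamma+\mu)p$ among its three summands; subtracting the constant term of $\p_l$ then cancels exactly one $\alpha\delta\epsilon$ and the whole $\epsilon(\gamma+\mu)p$, while flipping the sign of $-\kappa\gamma p$. The leftover,
\[
\alpha^2\delta+\alpha^2\epsilon+\alpha\delta^2+\delta^2\epsilon+\alpha\epsilon^2+\delta\epsilon^2+2\alpha\delta\epsilon+(\alpha+\delta)(\gamma+\mu)p+\kappa\gamma p,
\]
is a sum of nonnegative monomials with $\alpha^2\epsilon$ and $\alpha(\gamma+\mu)p$ strictly positive, so $a_1^{(l)} a_2^{(l)} - a_3^{(l)} > 0$ for every $l$. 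It follows that each $A_l$ is Hurwitz, every mode $s_l(t)$ in the eigenfunction expansion of the linearized error $Z = Y - Y^E$ decays exponentially, and therefore $Z \to 0$ in $L^2(\Omega)$, which is the desired linear (Lyapunov) stability of $Y^E$. As a byproduct the diffusion shift only enlarges the stability margin, ruling out any Turing-type bifurcation — consistent with $p > 0$ being the lemma's sole hypothesis.
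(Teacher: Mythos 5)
Your proposal is correct and follows the same overall strategy as the paper (modal decomposition $Z=\sum_l s_l\varphi_l$, then showing each $A_l=-\lambda_l D+\partial_Y F(Y^E)$ is Hurwitz), but it is in fact \emph{more complete} than the paper's own argument. The paper only observes that the coefficients of the characteristic cubic are positive when $p>0$ and stops there; for a degree-$3$ polynomial, positivity of the coefficients is necessary but not sufficient for all roots to lie in the open left half-plane (it excludes nonnegative real roots but not complex-conjugate pairs with positive real part), so the full Routh--Hurwitz condition $a_1^{(l)}a_2^{(l)}>a_3^{(l)}$ must be checked. You carry out exactly that check, and your algebra is right: your general $\p_l$ specializes at $\lambda_0=0$ to the paper's displayed polynomial (with $a_3^{(0)}=p\mu(\gamma+\kappa+\mu)$), your simplification of the constant term after the $\kappa\gamma p$ cancellation is correct, and the residual
\begin{equation*}
a_1^{(l)}a_2^{(l)}-a_3^{(l)}=\alpha^2\delta+\alpha^2\epsilon+\alpha\delta^2+\delta^2\epsilon+\alpha\epsilon^2+\delta\epsilon^2+2\alpha\delta\epsilon+(\alpha+\delta)(\gamma+\mu)p+\kappa\gamma p
\end{equation*}
is indeed a sum of nonnegative terms with $\alpha^2\epsilon>0$, valid uniformly in $l$ since $\delta=\lambda_l d_2\geq 0$ and $\alpha,\epsilon$ only grow with $\lambda_l$. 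So your write-up closes a genuine gap in the paper's reasoning rather than merely reproducing it; the concluding remark that the diffusion shift only enlarges the stability margin (no Turing bifurcation) is a correct and worthwhile byproduct.
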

\begin{remark}
By $\frac{\beta}{\gamma+\mu}=1$, both the \textsc{Jacobi}ans eveluated at the aforementioned equilibria contain zero eigenvalue. The task remains to find out which one among the dynamics associated with the equilibria on the local center manifolds is stable, since then the corresponding equilibrium becomes locally \textsc{Lyapunov} stable. This task first requires transformation of the original system around the investigated equilibrium for which the concept of local center manifold can be used. The corresponding discussion can be technically long but of less significant importance which we therefore omit in the presentation.   
\end{remark}

\section{Existence of optimal parameter}
\label{sec:optparam}
In the sequel, we discuss the well-posedness of the optimization problem \eqref{eq:optee} in the sense that it admits a global minimizer of the objective functional. The argument for proving the existence in this paper is inspired by \textsc{Tr\"{o}ltzsch} in \cite[Chapter~4.4 and 5.3]{Tro2010i}. 

\begin{theorem}\label{thm:existeeoptee}
To the problem~\eqref{eq:optee} there exists at least one globally optimal minimizer $\beta\in L^{\infty}(0,T)$.
\end{theorem}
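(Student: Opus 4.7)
The plan is to apply the direct method of the calculus of variations. The admissible set $\mathcal{B}:=\{\beta\in L^{\infty}(0,T):\beta_{\min}\stackrel{\aev}{\leq}\beta\stackrel{\aev}{\leq}\beta_{\max}\}$ is nonempty and $J\geq 0$, so $\bar J:=\inf_{\mathcal{B}} J$ is finite. First I would pick a minimizing sequence $(\beta_n)_{n\geq 0}\subset\mathcal{B}$ and associate to each $\beta_n$ the unique nonnegative bounded weak solution $Y_n=(y_{1,n},y_{2,n},y_{3,n})$ supplied by Theorems~\ref{thm:existee}--\ref{thm:positiveee}. Since $\|\beta_n\|_{L^{\infty}(0,T)}\leq \|\beta_{\max}\|_{L^{\infty}(0,T)}$, a weak-$\ast$ convergent subsequence $\beta_n\rightharpoonup^{\ast}\beta^{\ast}$ in $L^{\infty}(0,T)$ exists, and the pointwise box constraint is preserved because $\mathcal{B}$ is convex and strongly closed in $L^2(0,T)$, hence weakly closed by \textsc{Mazur}'s lemma.

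Next I would transport compactness to the states. The a priori estimates produced in the proof of Theorem~\ref{thm:existee} depend on $\beta$ only through $\beta_{\min},\beta_{\max}$, so they are uniform in $n$; extracting a further subsequence gives $y_{i,n}\rightharpoonup y_i^{\ast}$ in $L^2(0,T;V)$ and $\partial_t y_{i,n}\rightharpoonup \partial_t y_i^{\ast}$ in $L^2(0,T;V^{\ast})$, while the \textsc{Aubin--Lions} lemma yields the strong convergence $y_{i,n}\to y_i^{\ast}$ in $L^2(Q)$ (and a.e.\ on $Q$ after another extraction) for $i=1,2,3$. The main obstacle will be passing to the limit in the bilinear control-state nonlinearity $\beta_n\,y_{1,n}y_{2,n}/y_n$ that enters the weak formulation of \eqref{eq:S}--\eqref{eq:I}. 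The \textsc{Lipschitz} continuity of the extended map $(y_1,y_2,y_3)\mapsto y_1y_2/y$ recalled in Section~\ref{sec:analysis}, together with the strong $L^2(Q)$ convergence of the states, delivers $g_n:=y_{1,n}y_{2,n}/y_n\to g^{\ast}:=y_1^{\ast}y_2^{\ast}/y^{\ast}$ strongly in $L^2(Q)$. For any test function $\varphi\in L^2(0,T;V)$ I would split
\begin{equation*}
\intt_Q \beta_n g_n\varphi\,\dd x\,\dd t = \intt_Q \beta_n(g_n-g^{\ast})\varphi\,\dd x\,\dd t + \int_0^T \beta_n(t)\Phi(t)\,\dd t,
\end{equation*}
with $\Phi(t):=\int_{\Omega}g^{\ast}(t,\cdot)\varphi(t,\cdot)\,\dd x\in L^1(0,T)$. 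The first summand vanishes by \textsc{H\"{o}lder} and $\|g_n-g^{\ast}\|_{L^2(Q)}\to 0$, while the second converges to $\int_0^T\beta^{\ast}\Phi\,\dd t$ by the weak-$\ast$ convergence of $\beta_n$ against $\Phi\in L^1(0,T)$. The linear diffusion and decay terms pass to the limit through the weak convergences of $y_{i,n}$ and $\nabla y_{i,n}$, so $Y^{\ast}$ is the weak solution of \eqref{eq:model} associated with $\beta^{\ast}$; the initial condition transfers via the continuous embedding $W(0,T)\hookrightarrow C([0,T];H)$, and uniqueness is supplied by Theorem~\ref{thm:positiveee}.

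Finally I would invoke lower semicontinuity. The fidelity term $\tfrac{1}{2}\intt_Q(y_2-y_2^d)^2\,\dd x\,\dd t$ is continuous under strong $L^2(Q)$ convergence of $y_{2,n}$, and the regularization $\tfrac{\omega}{2}\intt_Q \beta^2\,\dd x\,\dd t$ is weakly lower semicontinuous because it is a continuous convex functional of $\beta$. Chaining these bounds yields
\begin{equation*}
J(Y^{\ast},\beta^{\ast})\leq \liminf_{n\to\infty}J(Y_n,\beta_n)=\bar J,
\end{equation*}
so $\beta^{\ast}\in\mathcal{B}$ is a global minimizer with associated state $Y^{\ast}$.
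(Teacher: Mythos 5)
Your proposal is correct and follows the same overall skeleton as the paper's proof: direct method, a minimizing sequence in the weakly closed convex set $\mathcal{B}$, uniform a priori bounds from Theorem~\ref{thm:existee}, \textsc{Aubin--Lions} for strong $L^2(Q)$ convergence of the states, identification of the limit as the state associated with the limit control, and weak lower semicontinuity of the convex regularization term combined with continuity of the fidelity term. The one place where you genuinely diverge is the limit passage in the nonlinearity, and there your treatment is the more careful one. The paper writes $F_n:=(f_1,f_2,f_3)(Y_n)$, suppressing the dependence of the reaction term on $\beta_n$, and then argues $\norm{F_n-F(\overline{Y})}_{L^2(Q)}\leq L\norm{Y_n-\overline{Y}}_{L^2(Q)}\rightarrow 0$ by \textsc{Lipschitz} continuity in $Y$ alone; this glosses over the fact that $f_1$ and $f_2$ contain the product $\beta_n\, y_{1,n}y_{2,n}\slash y_n$ in which the control converges only weakly. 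Your splitting
\begin{equation*}
\intt_Q \beta_n g_n\varphi\,\dd x\,\dd t = \intt_Q \beta_n(g_n-g^{\ast})\varphi\,\dd x\,\dd t + \int_0^T \beta_n(t)\Phi(t)\,\dd t
\end{equation*}
isolates exactly the weak-control-times-strong-state structure that makes the limit work, with the first term controlled by the uniform $L^{\infty}$ bound on $\beta_n$ and the second by weak-$\ast$ convergence tested against $\Phi\in L^1(0,T)$. What the paper's shortcut buys is brevity; what your argument buys is that the identification $\overline{F}=F(\overline{Y},\overline{\beta})$ is actually justified rather than asserted. Everything else (the \textsc{Mazur} argument for weak closedness of $\mathcal{B}$, the transfer of the initial condition through $W(0,T)\hookrightarrow C([0,T];H)$, the decomposition $J=J_Y+J_{\beta}$ in the final lower semicontinuity step) matches the paper's reasoning.
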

\begin{proof}
Given $\beta\in \mathcal{B}:=\set{\alpha\in L^{\infty}(0,T):\,\beta_{\min}\stackrel{\aev}{\leq}\alpha\stackrel{\aev}{\leq}\beta_{\min}}$, we can compute $Y(\beta):=(y_1,y_2,y_3)(\beta)$ the unique weak solution of \eqref{eq:model}. A glimpse over the formulation of $J$ (convex and continuous) suggests that $J$ is bounded from below by zero. Therefore, there must exist the infimum $0\leq J_{\inf}:=\inf_{\beta\in \mathcal{B}}J(Y(\beta),\beta)$ and $(\beta_n)_{n\geq 0}\subset \mathcal{B}$ for which $\lim_{n\rightarrow\infty}J(Y_n,\beta_n)=J_{\inf}$, where $Y_n:=Y(\beta_n)$. We note that boundedness, convexity and closedness guarantee the weakly closedness of $\mathcal{B}$, i.e. there exists $\overline{\beta}\in\mathcal{B}$ for which $\beta_n\rightharpoonup\overline{\beta}$ in $L^2(0,T)$ for some sub-sequence $(\beta_n)_{n\geq0}$, i.e. at the expense of relabelling sub-indices with $n$. Consequently, there exists $\overline{Y}\in W(0,T)$ for which $Y_n\rightharpoonup \overline{Y}$. Two things remain to show: (1) $Y(\overline{\beta})=\overline{Y}$ and (2) $\overline{\beta}$ is a global minimizer of $J$.

By means of $(y_{i,n})_{n\geq0}\subset W(0,T)$ for $i=1,2,3$, the reaction terms $(F_n)_{n\geq0}$ where $F_n:=(f_1,f_2,f_3)(Y_n)$ are bounded in $L^2(Q)$ due to \textsc{Lipschitz} continuity. Consequently, there exists $(\overline{f}_1,\overline{f}_2,\overline{f}_3)=:\overline{F}\in L^2(Q)$ such that $F_n\rightharpoonup \overline{F}$. By weak solution, we obtain
\begin{align*}
\int_{\Omega}\partial_t y_{i,n}\varphi_i\,\dd x+d_i\int_{\Omega}\nabla y_{i,n}\cdot\nabla \varphi_i\,\dd x=\int_{\Omega}f_{i,n}\varphi_i\,\dd x\stackrel{n\rightarrow\infty}{\longrightarrow}&\\
\int_{\Omega}\partial \overline{y}_{i}\varphi_i\,\dd x+d_i\int_{\Omega}\nabla \overline{y}_{i}\cdot\nabla \varphi_i\,\dd x=\int_{\Omega}\overline{f}_i\varphi_i\,\dd x&
\end{align*}
for all $\varphi_i\in L^2(0,T;V)$ and $i=1,2,3$. It remains to show that $\overline{F}=F(\overline{Y})$, since then this equality together with $\beta_n\rightharpoonup \overline{\beta}$ and $Y_n\rightharpoonup \overline{Y}$ implies $Y(\overline{\beta})=\overline{Y}$, as desired. Due to \textsc{Aubin--Lions} Lemma, it holds that $Y_n\rightarrow \overline{Y}$ in $L^2(Q)$. Theorem~\ref{thm:existee} has confirmed that $(Y_n)_{n\geq 0}$ is bounded in $W(0,T)$, so is $\overline{Y}$. Employing the \textsc{Lipschitz} continuity of $F$, one obtains $\norm{F_n-F(\overline{Y})}_{L^2(Q)}\leq L\norm{Y_n-\overline{Y}}_{L^2(Q)}$ for some $L>0$. Taking $n\rightarrow \infty$, we immediately attain the strong convergence $F_n\rightarrow F(\overline{Y})$ in $L^{\infty}(Q)$ as well as $L^2(Q)$ by the boundedness of $(Y_n)_{n\geq 0}$ and convergence $Y_n\rightarrow \overline{Y}$. Adding to this from the previous result $F_n\rightharpoonup \overline{F}$ in $L^2(Q)$, it therefore holds $F(\overline{Y})=\overline{F}$ due to the fact that the limits in both topologies are the same.

As for proving that $\overline{\beta}$ is a global minimizer of $J$, we employ the continuity and convexity of $J_{\beta}$ in $J=J_Y+J_{\beta}$, where $J_{\beta}:=\frac{\omega}{2}\intt_{Q}\beta^2\,\dd x\dd t$, to have $\beta_n\rightharpoonup \overline{\beta}$ implying $\liminf_{n\rightarrow\infty}J_{\beta}(\beta_n)\geq J_{\beta}(\overline{\beta})$ according to \cite[Theorem~2.12, p.47]{Tro2010i} and specific references mentioned therein. Finally, 
\begin{equation*}
J_{\inf}=\lim_{n\rightarrow \infty}J(Y_n,\beta_n)=\lim_{n\rightarrow\infty}J_{Y}(Y_n)+\liminf_{n\rightarrow\infty}J_{\beta}(\beta_n)\geq J(\overline{Y},\overline{\beta}),
\end{equation*}
which shows the second claim. 
\end{proof}

\section{Numerical experiments}
\label{sec:numerics}
\subsection{Indirect method}

The indirect method is based on the principle "optimize then discretize". It aims at generating a governing system representing the first order necessary optimality conditions. This system consists of the state and adjoint equations, also the gradient of the objective functional that follows a certain inequality in the optimality. One method to solve this system is the gradient method \cite{Rag1977i}, which is based on iteratively seeking the optimal parameter $\beta$ in the respect of walking towards the infimum of $J$. 

Let us briefly derive the state and adjoint equations, taking a note that the gradient of the objective functional serves as the descent direction in the gradient method. Consider the \textsc{Lagrange}an function
\begin{align*}
\LM(Y,\beta,Z) = J(Y,\beta) + \sum_{i=1}^3\bra{f_i-\partial_t y_i,z_i}_{L^2(Q)}-\sum_{i=1}^3d_i\bra{\nabla y_i,\nabla z_i}_{L^2(Q)},
\end{align*}
where $Y:=(y_1,y_2,y_3)$ and $Z:=(z_1,z_2,z_3)$ for which $z_i\in W(0,T)$ denote the weak solution of \eqref{eq:model} for a given $\beta\in \mathcal{B}$ and the \textsc{Lagrange}an adjoint variables, respectively. The necessary optimality condition states that all the \textsc{G\^{a}teaux} directional derivatives (\textsc{G\^{a}teaux} variation) of $\LM$ with respect to all arguments towards all directions are zero. Zeroing the variation of $\LM$ with respect to $z_i$ towards all directions $h_z^i\in W(0,T)$ returns in the state equation \eqref{eq:S}--\eqref{eq:R}, i.e. by putting back $\int_{\Omega}d_i\nabla y_i\cdot\nabla z_i\,\dd x-d_i\int_{\partial\Omega}\nabla y_i\cdot \nu z_i\,\dd s(x)=-\int_{\Omega}d_i\Delta y_iz_i\,\dd x$. Zeroing the variation of $\LM$ with respect to $y_i$ towards all directions $h_y^i\in W(0,T)$, with $h_y^i(0,\cdot)=0$, and using integration by parts, we obtain
\begin{align*}
0&=\bra{\delta_{i2}(y_2-y_2^d),h_y^i}_{L^2(Q)}+\sum_{j=1}^3\bra{\partial_{y_i}f_jz_j,h_y^i}_{L^2(Q)}-\bra{h_y^i(T),z_i(T)}_{L^2(\Omega)}\\
&+\underbrace{\bra{h_y^i(0),z_i(0)}_{L^2(\Omega)}}_{=0}+\int_{(0,T)}\bra{\partial_tz_i,h_y^i}_{H^1(\Omega)}\,\dd t-d_i\bra{\nabla h_y^i,\nabla z_i}_{L^2(Q)}
\end{align*}
where $\delta_{i2}$ denotes the \textsc{Kronecker} delta. In the strong formulation, $z_i$ must now follow the reaction--diffusion characterizing adjoint equation
\begin{subequations}\label{eq:adjee}
\begin{alignat}{2}
	\partial_t z_i+d_i\Delta z_i&=-\sum_{j=1}^3\partial_{y_i}f_jz_j-\delta_{i2}(y_2-y_2^d)\quad &&\text{in }Q,\label{eq:zi}\\
	\partial_{\nu} z_i&=0\quad &&\text{on }\Sigma,\label{eq:fluxzi}\\ 
	z_i(T,\cdot)&=0\quad &&\text{in }Q_T:=\{T\}\times \Omega,\label{eq:initzi}
\end{alignat}
\end{subequations}
for all $i=1,2,3$. In an extended version, the adjoint system \eqref{eq:adjee} is given by
\begin{alignat*}{2}
	\partial_t z_1+d_1\Delta z_1&=\frac{\beta}{y^2}(y-y_1)y_2z_1-\frac{\beta}{y^2}(y-y_1)y_2z_2\quad &&\text{in }Q,\\
	\partial_t z_2+d_2\Delta z_2&=\bra{\frac{\beta}{y^2}(y-y_2)y_1-\mu}z_1-\bra{\frac{\beta}{y^2}(y-y_2)y_1-\gamma-\mu}z_2-\gamma z_3-(y_2-y_2^d)\quad &&\text{in }Q,\\
	\partial_t z_3+d_3\Delta z_3&= -\bra{\frac{\beta}{y^2}y_1y_2+\mu+\kappa}z_1+\frac{\beta}{y^2}y_1y_2z_2+(\mu+\kappa)z_3\quad &&\text{in }Q,\\ 
	0&=\partial_{\nu} z_1=\partial_{\nu} z_2=\partial_{\nu} z_3\quad &&\text{on }\Sigma,\\ 
	z_1&=z_2=z_3=0\quad &&\text{in }Q_T.
\end{alignat*}
Extending the space of the test functions $h_y^i$ to $L^2(0,T;V)$, we confirm the existence and uniqueness of a bounded weak solution $z_i$ for $i=1,2,3$ of \eqref{eq:zi}--\eqref{eq:initzi} by the virtue of Theorem~\ref{thm:existee} and Theorem~\ref{thm:positiveee}. To set up the initial condition in the proof, we have to first transform the time variable $\tau:=T-t$, and define new variables $\tilde{y}_i(\tau):=y_i(T-t)$, $\tilde{z}_i(\tau):=z_i(T-t)$, $\tilde{h}_y^i(\tau):=h_y^i(T-t)$, $\tilde{\beta}(\tau):=\beta(T-t)$ and $\tilde{f}_i:=f_i(\tilde{y}_1,\tilde{y}_2,\tilde{y}_3,\tilde{\beta})$.

Towards characterization of the gradient, it is essential to first check the differentiability of the weak solution $Y$ with respect to $\beta$. In the optimal control theory, the implicit representation of the (weak) solution in terms of control variable is often referred to as the \emph{control--to--state} operator. In our case, it is denoted by $S_i:\mathcal{B}\rightarrow W(0,T):\beta\mapsto y_i(\beta)$. We then rush through the following lemma before discussing the gradient method. Note that the proof is similar to that in \cite[Theorem~5.9, p.275]{Tro2010i}.

\begin{lemma}
The control--to--state operators $S_i$ for $i=1,2,3$ mapping from $L^{\infty}(0,T)$ to $W(0,T)\cap C(\bar{Q})$ are \textsc{Fr\'{e}chet} differentiable with respect to $\beta$. The corresponding derivative $\partial_{\beta}S_i$ is given by the unique solution of the following system
\begin{subequations}\label{eq:cts}
\begin{alignat}{2}
	\partial_t u_i-d_i\Delta u_i&=\sum_{j=1}^3\partial_{y_j}f_iu_j+\partial_{\beta}f_i\quad &&\text{in }Q,\label{eq:ui}\\
	\partial_{\nu} u_i&=0\quad &&\text{on }\Sigma,\label{eq:fluxui}\\ 
	u_i(0,\cdot)&=0\quad &&\text{in }Q_0,\label{eq:initui}
\end{alignat}
\end{subequations}  
for $i=1,2,3$ where $(y_1,y_2,y_3)$ is the weak solution of the state equation \eqref{eq:model}.
\end{lemma}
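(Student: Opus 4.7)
Fix $\beta\in\mathcal{B}$ and an admissible perturbation $h\in L^{\infty}(0,T)$ with $\|h\|_{L^{\infty}(0,T)}$ small enough that $\beta+h\in\mathcal{B}$. Write $Y=S(\beta)=(y_1,y_2,y_3)$ and $Y^h=S(\beta+h)=(y_1^h,y_2^h,y_3^h)$ for the unperturbed and perturbed states, both unique, nonnegative and bounded by Theorems~\ref{thm:existee} and~\ref{thm:positiveee}. The first step is to show that the linearized system \eqref{eq:cts}, read with the source $\partial_{\beta}f_i\,h$ on the right-hand side, admits a unique solution $u_i\in W(0,T)\cap C(\overline{Q})$. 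Because \eqref{eq:cts} is linear with coefficients $\partial_{y_j}f_i(Y)$ and source $\partial_{\beta}f_i(Y)h$ bounded in $L^{\infty}(Q)$ (using boundedness of $Y$ and the $C^1$ extension of $F$), this follows from a \textsc{Galerkin} construction entirely parallel to the proof of Theorem~\ref{thm:existee} but substantially simpler, since no nonlinear passage to the limit is required; the $C(\overline{Q})$ refinement is supplied by standard parabolic $L^p$--regularity.

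The second step introduces the increment $v_i^h:=y_i^h-y_i$, which solves
\begin{equation*}
\partial_t v_i^h-d_i\Delta v_i^h=f_i(Y^h,\beta+h)-f_i(Y,\beta)\quad\text{in }Q,\qquad \partial_{\nu}v_i^h|_{\Sigma}=0,\qquad v_i^h(0,\cdot)=0.
\end{equation*}
Testing with $v_i^h$, summing over $i$ and exploiting \textsc{Lipschitz} continuity of $F$ yields, via \textsc{Gronwall}, the a priori bound $\|v_i^h\|_{W(0,T)}\le C\|h\|_{L^{\infty}(0,T)}$ together with $v^h\to 0$ strongly in $L^2(Q)$. A parabolic $L^p$--$L^{\infty}$ estimate then lifts this to $\|v^h\|_{C(\overline{Q})}\to 0$ as $\|h\|_{L^{\infty}}\to 0$, a fact that will be essential for the residual estimate below.

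The third and main step identifies the derivative. Set $w_i^h:=v_i^h-u_i$; subtracting \eqref{eq:cts} from the equation for $v_i^h$ shows that $w_i^h$ satisfies a linear parabolic system with homogeneous boundary and initial data and source
\begin{align*}
R_i^h &= \int_0^1\bigl(\partial_Y f_i(Y+sv^h,\beta+sh)-\partial_Y f_i(Y,\beta)\bigr)\cdot v^h\,ds \\
&\quad + \int_0^1\bigl(\partial_{\beta}f_i(Y+sv^h,\beta+sh)-\partial_{\beta}f_i(Y,\beta)\bigr)h\,ds
\end{align*}
coming from the integral form of \textsc{Taylor}'s theorem. Uniform continuity of $\partial_Y f_i$ and $\partial_{\beta}f_i$ on the compact range $[0,M]^3$ of the trajectories, combined with $\|v^h\|_{\infty}+\|h\|_{\infty}\to 0$, yields $\|R_i^h\|_{L^2(Q)}=o(\|h\|_{L^{\infty}(0,T)})$. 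One more energy estimate on the $w_i^h$--system and \textsc{Gronwall} then give $\|w_i^h\|_{W(0,T)}=o(\|h\|_{L^{\infty}(0,T)})$, which is \textsc{Fr\'{e}chet} differentiability with derivative $u_i$; feeding the same remainder into the parabolic maximum-principle bounds recalled after Theorem~\ref{thm:positiveee} extends the $o(\cdot)$ statement to the $C(\overline{Q})$--topology stated in the lemma.

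\textbf{Main obstacle.} The delicate point is the $o(\|h\|)$ control of $R_i^h$. Because $F$ is only declared $C^1$ after the extension at $y=0$, no standard quadratic \textsc{Taylor} remainder with a bounded Hessian is available; the argument must instead rely on uniform continuity of $\partial F$ on the compact invariant set $[0,M]^3$ guaranteed by Theorems~\ref{thm:existee}--\ref{thm:positiveee}. Making this work requires the preliminary $L^{\infty}$--convergence $\|v^h\|_{\infty}\to 0$ (not just the $W(0,T)$--rate), which is the least routine ingredient and the bridge between the $W(0,T)$ and $C(\overline{Q})$ parts of the differentiability claim.
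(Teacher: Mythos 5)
Your argument is correct and is essentially the standard linearization-plus-remainder proof that the paper itself does not write out but defers to (it only remarks that ``the proof is similar to that in \cite[Theorem~5.9, p.275]{Tro2010i}''), namely: well-posedness of the linearized system, a \textsc{Gronwall} bound on the increment, and an $o(\|h\|)$ estimate of the \textsc{Taylor} remainder using uniform continuity of $\partial F$ on the bounded range of the trajectories. Your observation that the right-hand side of \eqref{eq:ui} should carry the direction $h$ (i.e.\ $\partial_{\beta}f_i\,h$) and that the $C(\overline{Q})$ part hinges on the uniform convergence $\|v^h\|_{\infty}\to 0$ is a faithful and slightly more careful rendering of that same route.
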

Then, the gradient method for solving \eqref{eq:optee} now reads as follows. We will use the notation $\PP_\mathcal{B}$ for the projection into $\mathcal{B}$, i.e. $\PP_{\mathcal{B}}(\alpha)=\arg\inf_{\beta\in\mathcal{B}}\norm{\alpha-\beta}_{L^2(Q)}$.

\begin{mdframed}
\begin{description}
\item[\textsc{Gradient method}]\label{alg:gradient}
\item[Return:] $Y^{(n)},\beta^{(n)},J^{(n)}$.
\item[Step 0:] Specify $n=0$, $\beta^{(n)}\in\mathcal{B}$, $\varepsilon>0$, $\delta_0>0$, $\dd J=\varepsilon+1$.
\item[Step 1:] Compute $Y^{(n)},Z^{(n)}$ from \eqref{eq:model} and \eqref{eq:adjee} as well as $J^{(n)}$ from \eqref{eq:optee}.
\item[Step 2:] While $|\dd J|\geq \varepsilon$, then do the following
\item[Step 2.a:] Compute $\partial_{\beta}Y$ from \eqref{eq:cts} and the gradient 
\begin{equation*}
\partial_{\beta}J^{(n)}=\partial_{\beta}J\bra{\beta^{(n)}}=\bra{\partial_{y_2}J\partial_{\beta}y_2+\partial_{\beta}J}\bra{\beta^{(n)}}.
\end{equation*}
\item[Step 2.b:] Update $\beta^{(n+1)}=\PP_{\mathcal{B}}\left(\beta^{(n)}-\delta_0\partial_{\beta}J^{(n)}\right)$.
\item[Step 2.c:] Compute $Y^{(n+1)},Z^{(n+1)},J^{(n+1)}$ and define $\dd J=J^{(n+1)}-J^{(n)}$.
\item[Step 2.d:] While $\dd J\geq 0$, i.e. the objective functional does not improve, then do the following
\begin{enumerate}[label=(2.d.\arabic*)]
\item Reduce the step length by 
\begin{equation*}
\delta_1 = \arg\min_{0<\delta< \delta_0}\phi(\delta):=J(\beta^{(n)}-\delta\partial_{\beta}J^{(n)})
\end{equation*}
where $\phi$ is an approximative quadratic function determined by the data: $\phi(0),\phi'(0)$ and $\phi(\delta_0)$ (see~\cite{GP2010i,WGS2014i}).
\item Update $\beta^{(n+1)}=\PP_{\mathcal{B}}(\beta^{(n)}-\delta_1\partial_{\beta}J^{(n)})$ and set $\delta_0=\delta_1$.
\item Compute $Y^{(n+1)},Z^{(n+1)},J^{(n+1)}$ and set $\dd J=J^{(n+1)}-J^{(n)}$.
\end{enumerate}
\item[Step 2.e:] Set $n=n+1$.
\end{description}
\end{mdframed}

\subsection{An approach from seasonal models}
The classical \textsc{ODE}--based SIRS model uses the simplest idea that the individuals are isolated and therefore, can only afford weak interactions with the others. The main aim in the model is to estimate of the infection rate $\beta$ and the recovery rate $\gamma$ using the given seasonal data. More importantly, we will use an appropriate estimate of $\gamma$ based on what is yielded from the seasonal models for the \textsc{PDE} model to focus only on the estimation of $\beta$, as indicated in previous discussions. 

\textbf{Hospitalized cases and meteorology}. Dengue fever and its severe variants remain one of the major health problems that threaten communities in Indonesia, including the city of Semarang. The Health Office of the City of Semarang has indicated in \cite{HOS2016i} that the number of hospitalized cases arose at times with increasing mobility and population density. Starting from January 1, 2010, the office has identified the endemicity of the diseases by collecting daily reports of hospitalized cases by mixed illnesses (Dengue Fever + Dengue Hemorrhagic Fever + Dengue Shock Syndrome) from $164$ villages (93\% of the city area). The surveyed villages had been seen to be endemic, covering approximately $1.5$ million residents or over $400$ thousands households. The monthly data summarized from daily reports starting from January 1, 2010 to April 30, 2015 is shown in the most upper part of Fig.~\ref{fig:meteorology}.

According to various studies, e.g. \cite{HE2007i,HC1997i,IMN2007i,SLP2007i}, seasonality in weather has been proven to influence the propagation of mosquito-borne diseases. It is essentially driven by the fact that weather influences the life cycle and breeding of mosquitoes carrying dengue viruses. Here, we gathered meteorology data for the Semarang station from the National Oceanic and Atmospheric Administration via Global Surface Summary of the Day \url{https://data.noaa.gov/dataset}, which associates with the website \url{https://www.wunderground.com}. As a correlation test, we have collected 6 parameters, all measured on the daily median basis: precipitation, dew point, pressure, temperature, visibility, and wind speed. Those data were collected only for the time window January 1, 2010 -- April 30, 2015 (1946 days) plus 30 days before January 1, 2010. The rationale of choosing up to 30-day backward shift is as follows. It needs several days after the rainfall to form standing pools, water containers, and other breeding sites for mosquitoes, until then some mosquitoes breed, grow, bite humans, and cause the diseases. Therefore, not at the day where the rainfall achieves its peak does the disease outbreak occur, but several days after (between 0 to average mosquito lifetime period $\approx$ 1 month). Moreover, most mosquito species likely feed at dawn and dusk, seemingly suggesting that the incidence negatively correlates with visibility. After summarizing all those daily data into monthly data, we found no correlation between the incidence data and wind speed. As for the other parameters, the highest correlations were found in different time shifts (see for the plots in Fig.~\ref{fig:meteorology}).


\begin{figure}[htbp!]
\centering
\hspace*{-1cm}
\includegraphics[width=0.9\textwidth]{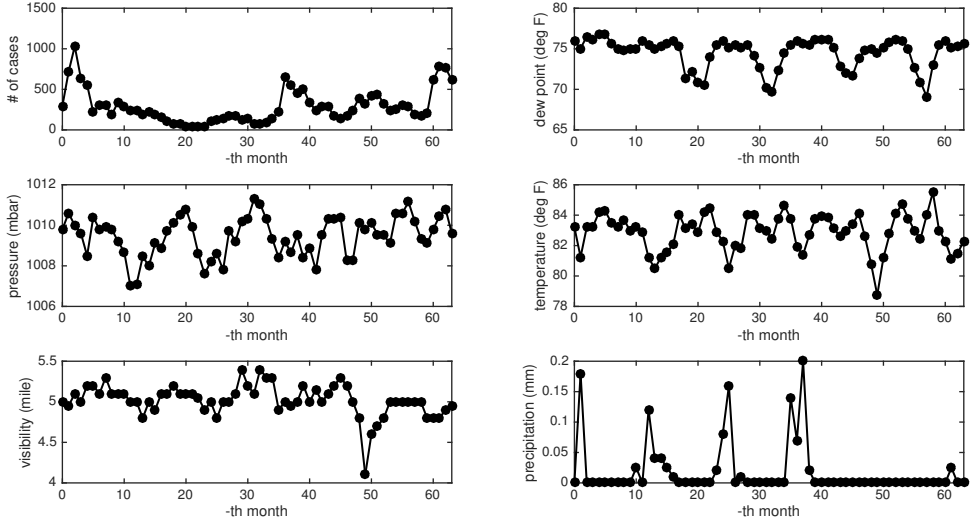}
\caption{The most correlated meteorological data with incidence case: dew point, pressure, temperature, visibility, and precipitation in 3, 24, 30, 29, and 3 days respectively before the incidence, with the correlations 0.4658, 0.1131, -0.1123, -0.2449, and 0.1858 respectively.}
\label{fig:meteorology}
\end{figure}

\textbf{Seasonal models}. We apply here the seasonal SIRS model that is based on the spatially homogeneous version of \eqref{eq:model}. As a first step we use only constant parameters including $\beta$ and $\gamma$. In the next step, we complement this model with its stochastic version that better helps pronounce the noise in the data. The current framework of adding state-dependent noise into the deterministic model was inspired by those in \cite{AB2000i,MA2006i}. Our stochastic model was based on the assumption that the difference in the states in $1$ individual before and after some time follow the \textsc{Gauss}ian distribution. We have distinguished all the events that appear based on the deterministic model, together with the corresponding probabilities for a small increment $\delta t$, in Table~\ref{tab:event}. One practical example to read the table is chosen from the third row: "the probability $\PM_3$ that the event $\lambda_3=(-1,1,0)'$ appears, i.e. the susceptible compartment $y_1$ decreases while at the same time the infective compartment $y_2$ increases by 1 individual after $\delta t$, is determined by the infection $\frac{\beta}{y}y_1y_2\delta t$".

\begin{table}[h!]
\centering
\begin{tabular}{c|c|c|c|c|c}
\hline
$j$ & Event & $\delta y_1$ & $\delta y_2$ & $\delta y_3$ & Probability $\PM_j$\\ \hline
1 & $\lambda_1$ & 1 & 0 & 0 & $\mu y\delta t$\\
2 & $\lambda_2$ & -1 & 0 & 0 & $\mu y_1\delta t$ \\
3 & $\lambda_3$ & -1 & 1 & 0 & $\frac{\beta}{y} y_1y_2\delta t$\\
4 & $\lambda_4$ & 0 & -1 & 0 & $\mu y_2 \delta t$\\
5 & $\lambda_5$ & 0 & -1 & 1 & $\gamma y_2 \delta t$\\
6 & $\lambda_6$ & 0 & 0 & -1 & $\mu y_3\delta t$\\ 
7 & $\lambda_7$ & 1 & 0 & -1 & $\kappa y_3\delta t$\\\hline
\end{tabular}
\caption{All the events appearing in the spatially homogeneous seasonal deterministic model~\eqref{eq:model}.}
\label{tab:event}
\end{table}
Owing to the notation for the deterministic model $\dd_t{Y}=F(Y)$, we may now generate the expectation $\mathbb{E}(\delta Y)=\sum_{j}\PM_j\lambda_j=F(Y)\delta t$ and the variance $\textnormal{Var}(\delta Y)=\mathbb{E}(\delta Y\delta Y')-(\mathbb{E}(\delta Y))^2\approx \mathbb{E}(\delta Y\delta Y')=\sum_{j}\PM_j\lambda_j\lambda_j\! '$ by considering that $\delta t$ is very small. Defining $q_j:=\PM_j\slash \delta t$ at the expense of extracting $\delta t$ and a vector of \textsc{Wiener} processes $\mathcal{W}=(W_1,W_2,W_3)$ such that $\frac{\delta \mathcal{W}}{\sqrt{\delta t}}=\frac{\mathcal{W}_{t+\delta t}-\mathcal{W}_t}{\sqrt{\delta t}}\sim \frac{N(0,\delta t)}{\sqrt{\delta t}}\sim N(0,1)$, we have 
\begin{equation*}
\delta Y = \mathbb{E}(\delta Y) + \sqrt{\textnormal{Var}(\delta Y)}\frac{\delta \mathcal{W}}{\sqrt{\delta t}}\stackrel{\delta t\rightarrow 0}{\longrightarrow}\dd Y = F(Y)\, \dd t + \sqrt{\VM} \,\dd \mathcal{W}.
\end{equation*}  
At a certain aim of preventing the noise making the solution trajectory jumps within high values from time to time, we use a damping factor $\rho\in\R$ such that 
\begin{equation}\label{eq:stocee}
\dd Y = F(Y)\, \dd t + \rho\sqrt{\VM} \,\dd \mathcal{W}.
\end{equation}
We shall note that $\rho$ has to be sensitive with the data. Now our model consists of both deterministic and noise parts. The variance matrix $\VM=\sum_{j}q_j\lambda_j\lambda_j\! '$ is given by
\begin{equation*}
\VM=\left(\begin{array}{ccc}
\mu y+\mu y_1+\frac{\beta}{y} y_1y_2 + \kappa y_3 & -\frac{\beta}{y} y_1y_2 & -\kappa y_3\\
-\frac{\beta}{y} y_1y_2 & \frac{\beta}{y} y_1y_2+\mu y_2 +\gamma y_2 & -\gamma y_2\\
-\kappa y_3 &-\gamma y_2 & \gamma y_2+\mu y_3+\kappa y_3
\end{array}
\right).
\end{equation*}
Computing the square root of $\VM$ as used in the model~\eqref{eq:stocee} requires diagonalization of $\VM$, which in our case results in very lengthy expressions of the variables. To mitigate this technical issue, we replace $\sqrt{\VM}$ with the ansatz matrix $G=G(Y)$ where $G_{ij}=\sqrt{q_j}\lambda_j^i$ such that it holds $\VM=GG'$. Taking back $\rho$ from its appearance, it turns out that the forward \textsc{Kolmogorov} equations (\textsc{Fokker--Plank} equations) for the model with $\sqrt{\VM}$ and that with $G$ are the same, leading to the same probability distributions for the two model solutions. In return, we now use
\begin{equation}\label{eq:stoceeuse}
\dd Y = F(Y)\,\dd t+ \rho G(Y)\,\dd \mathcal{W}.
\end{equation}
Using both deterministic and stochastic models, we specify the temporary cross immunity period $1\slash\kappa = 9$ months, the initial condition for susceptible $y_{1,0}=1.4$ million, the initial condition for infective $y_{2,0}=y_{2}^d(0)$ as the initial point in the seasonal data, and the human lifetime period $1\slash\mu = 65\times 12$ months. After several realizations, we came across a certain optimal value for the damping factor $\rho\approx 1.69\times 10^{-2}$. Beside $\gamma$ and $\beta$, we also aim at estimating the other remaining unknown: the initial condition of the recovered $y_{3,0}$. The realizations for both deterministic and stochastic models can be seen in Fig.~\ref{fig:realize}(a)--(b).
\begin{figure}[htbp!]
\subfigure[]{\includegraphics[width=0.35\textwidth]{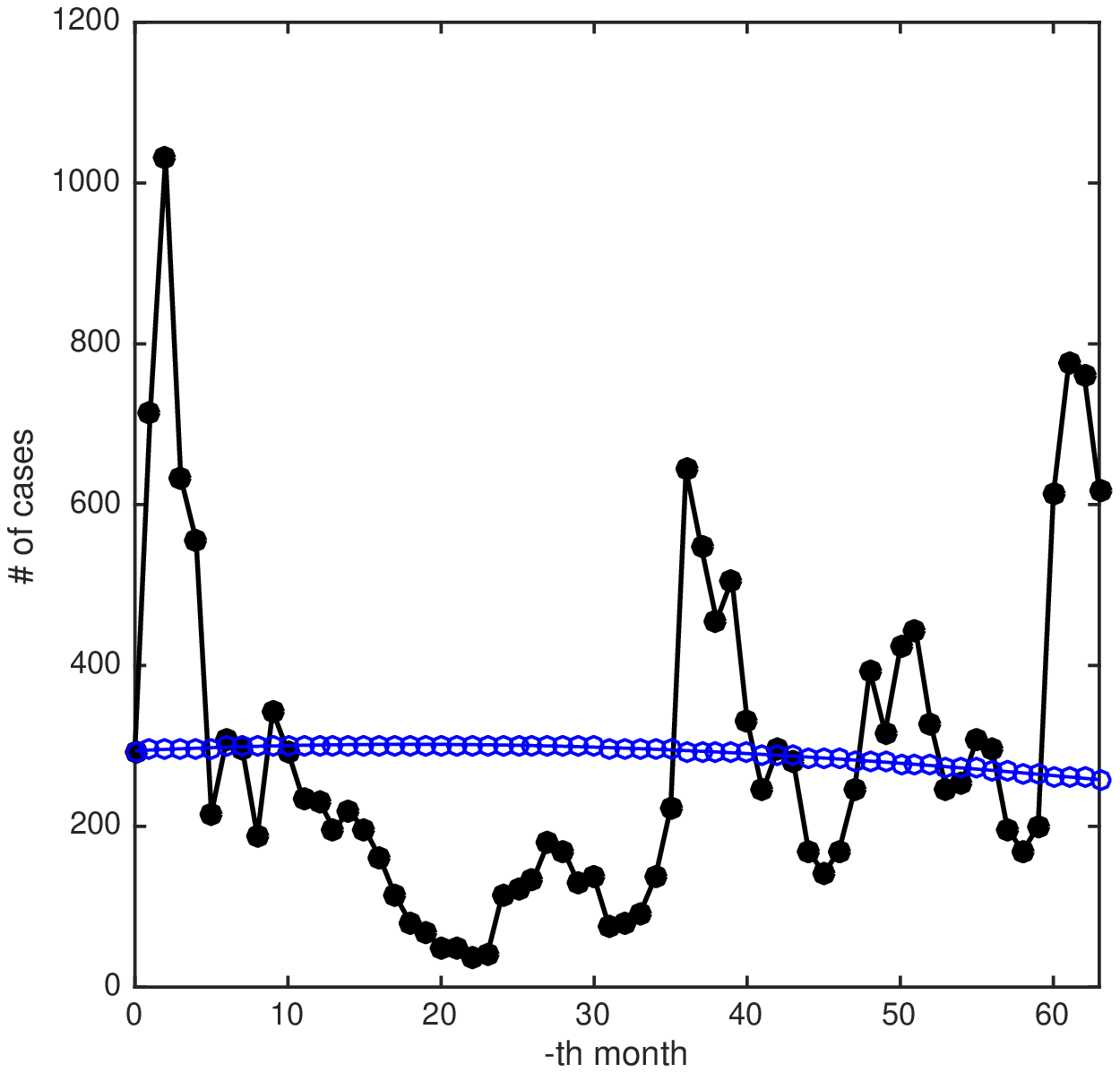}}
\hfill\hspace*{-1cm}
\subfigure[]{\includegraphics[width=0.35\textwidth]{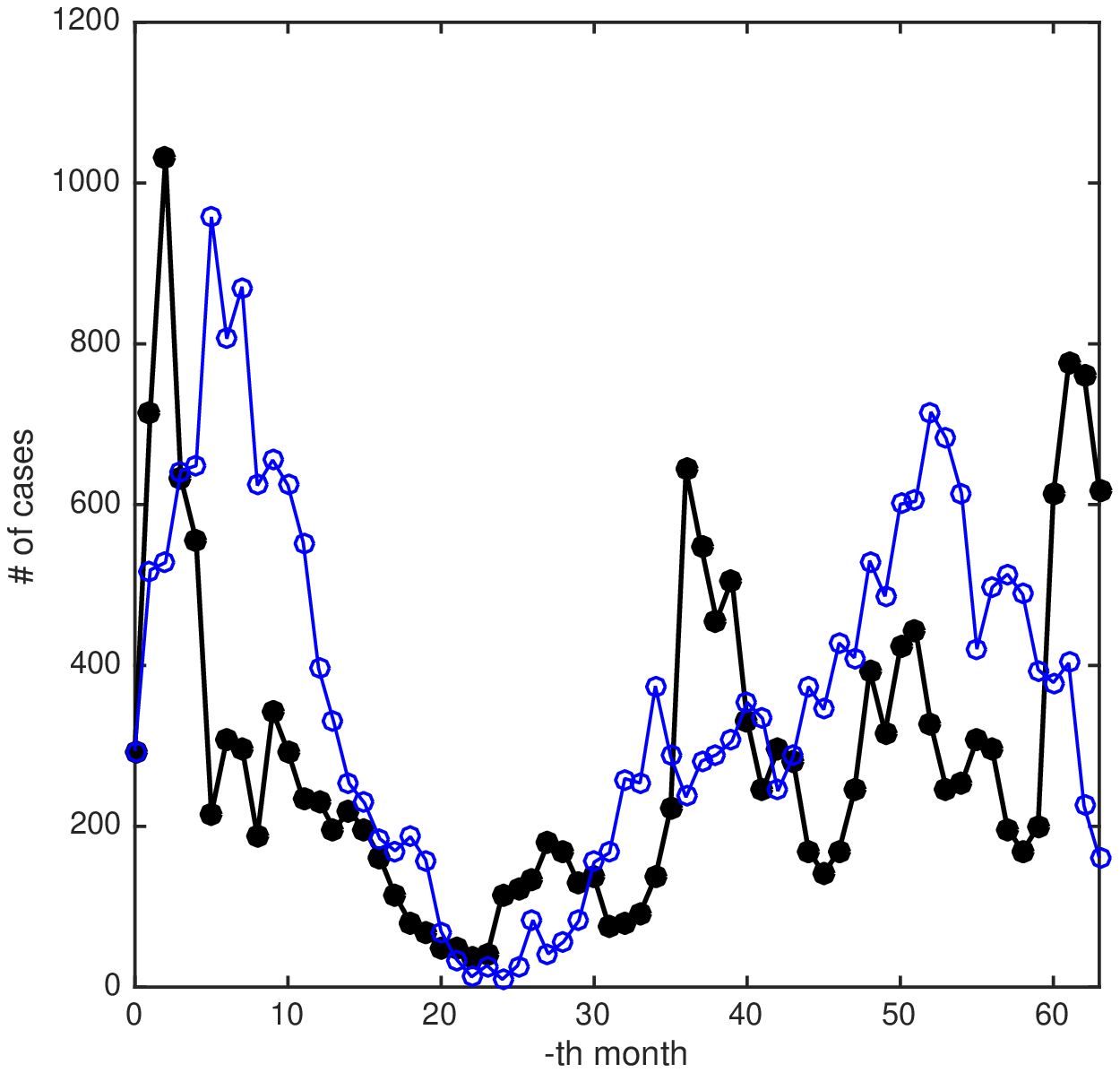}}
\hfill\hspace*{-1cm}
\subfigure[]{\includegraphics[width=0.35\textwidth]{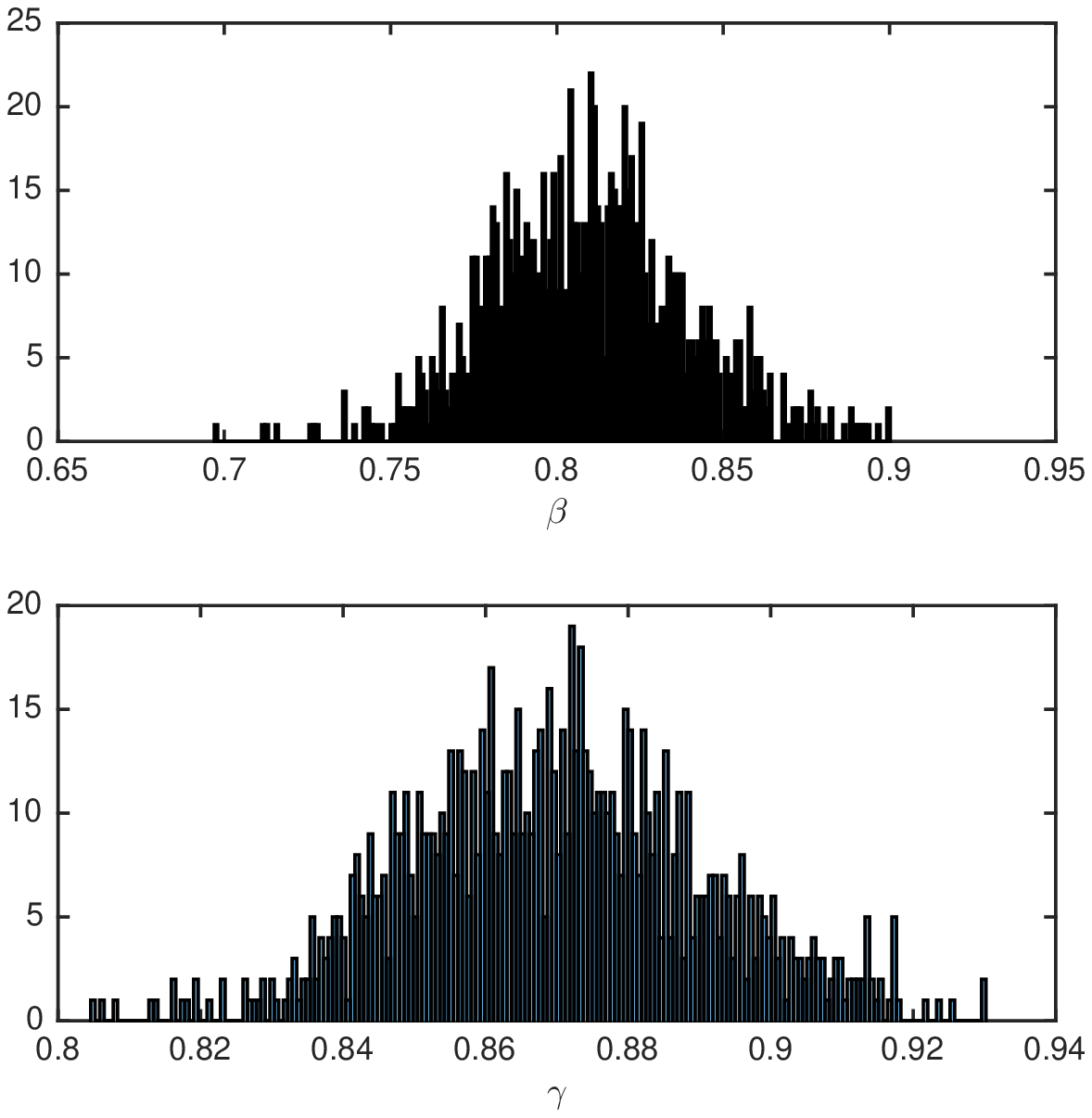}}
\hfill
\subfigure[]{\includegraphics[width=0.35\textwidth]{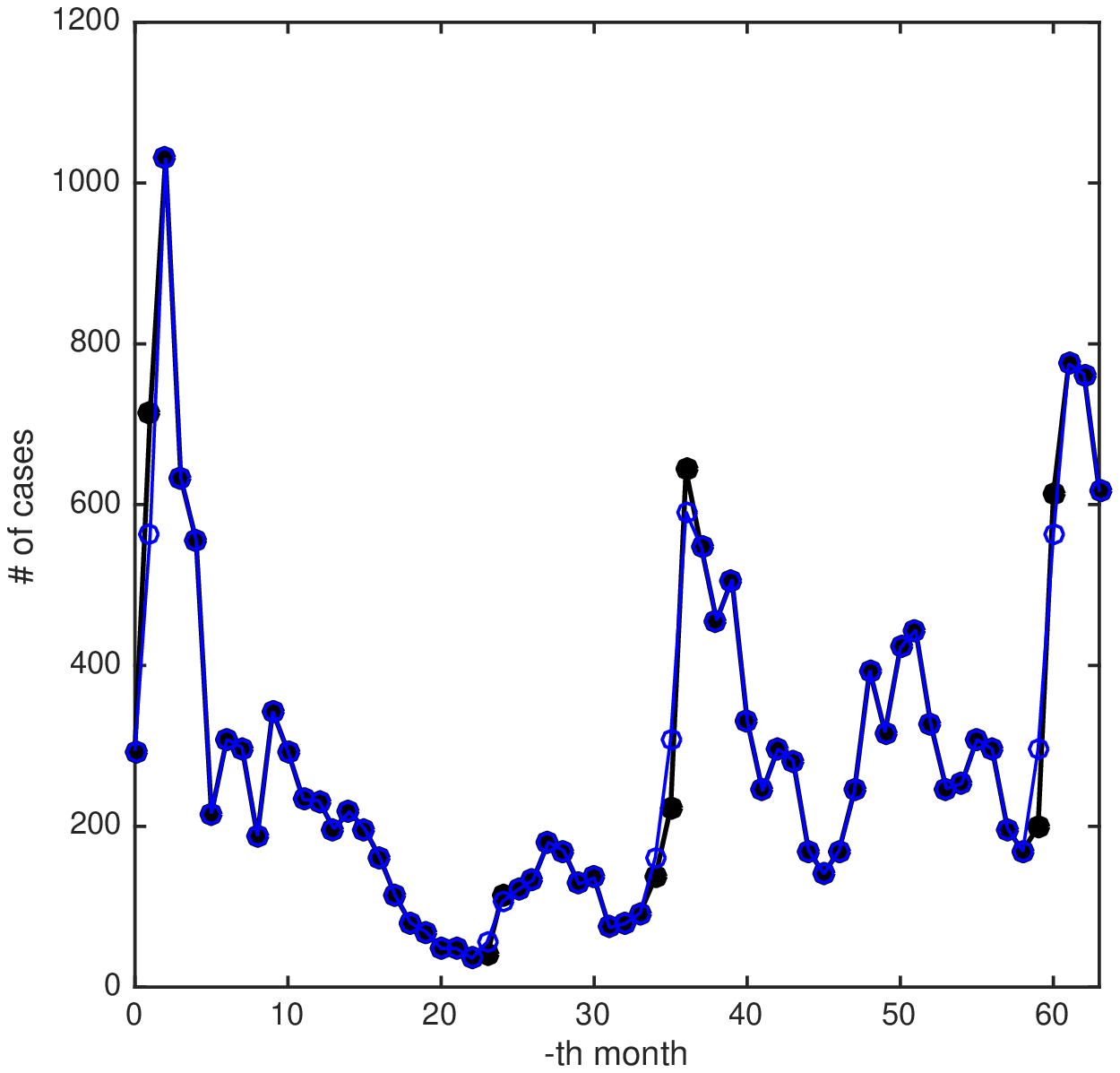}}
\hfill\hspace*{-1cm}
\subfigure[]{\includegraphics[width=0.35\textwidth]{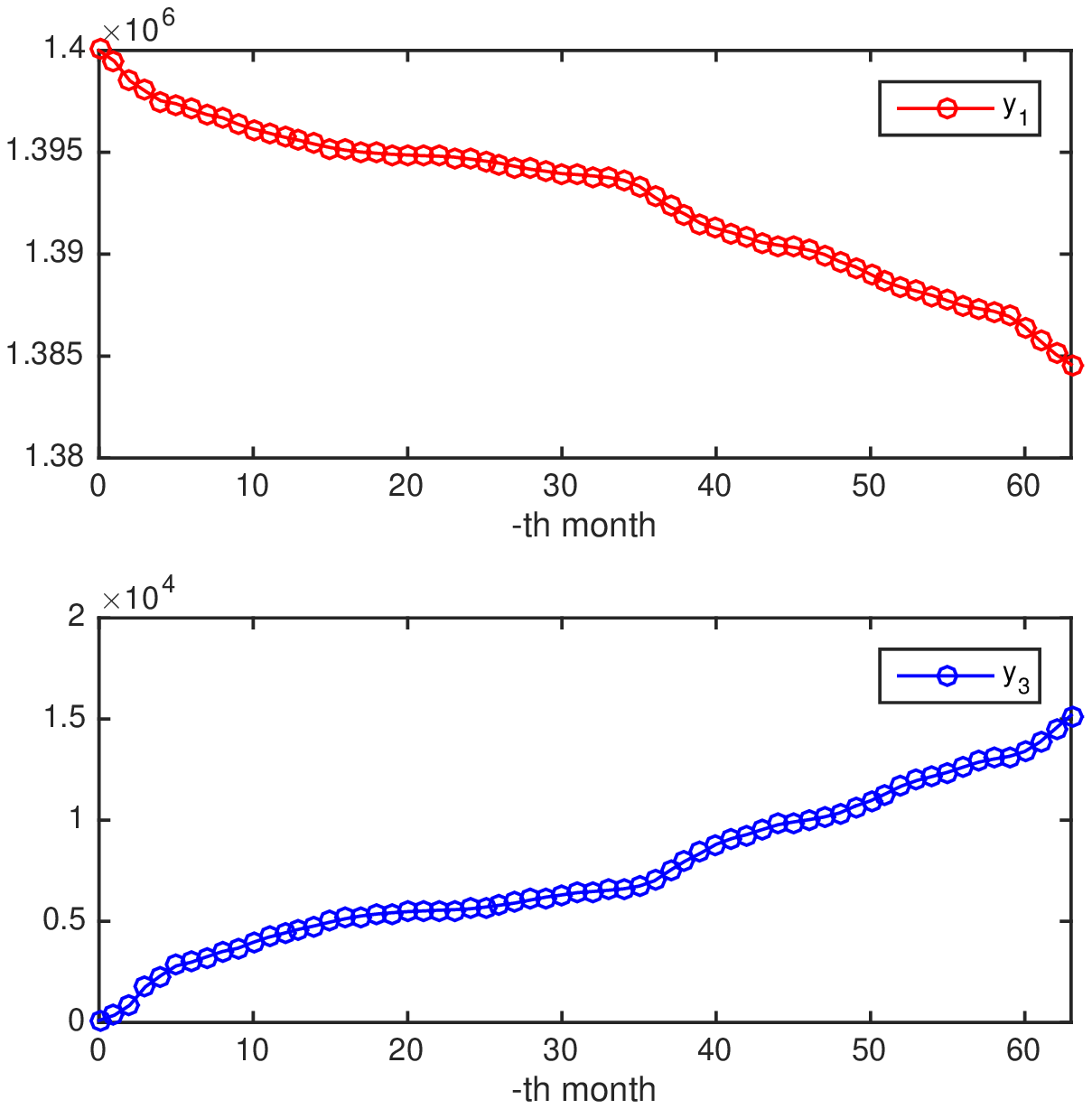}}
\hfill\hspace*{-1cm}
\subfigure[]{\includegraphics[width=0.35\textwidth]{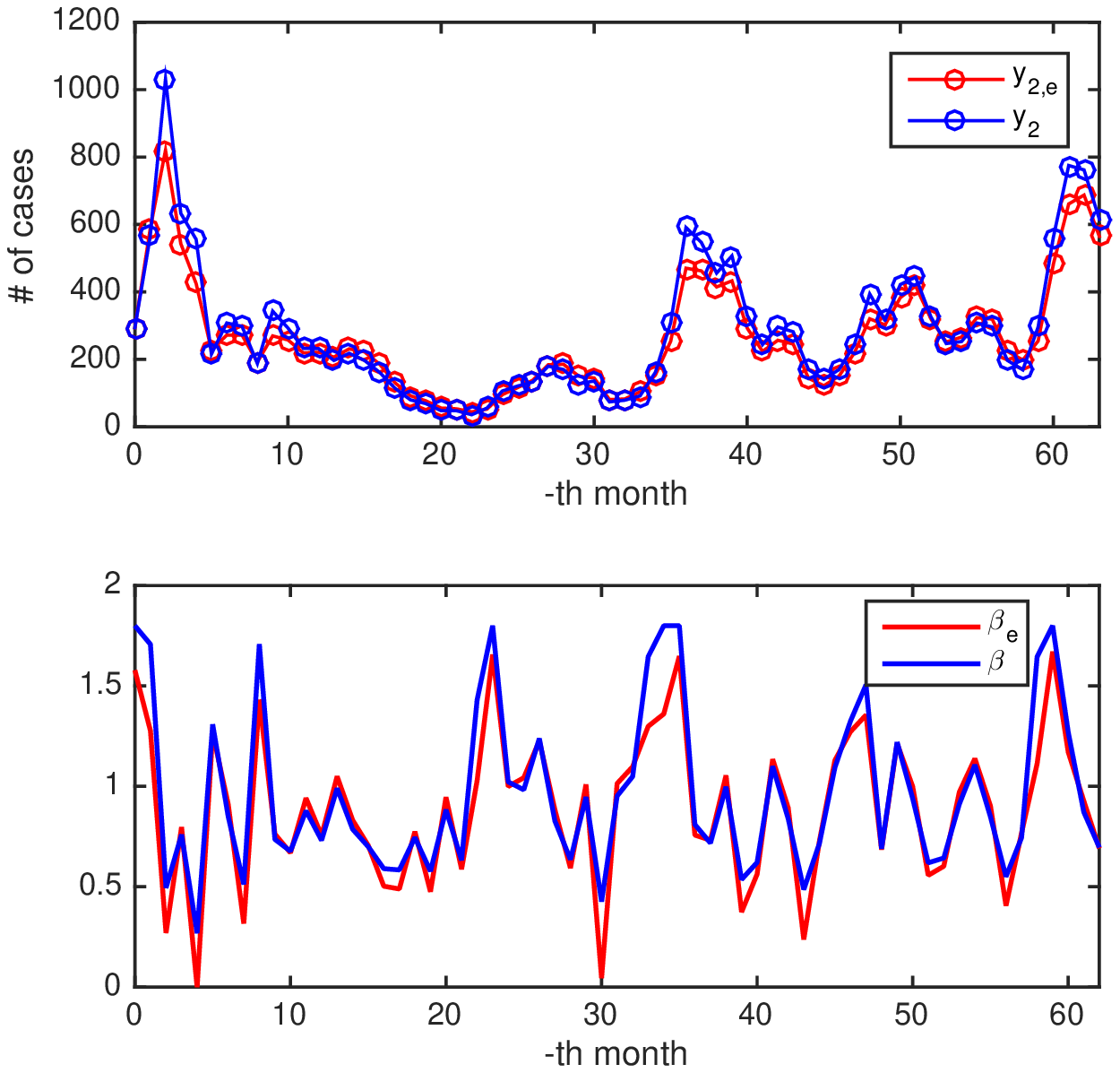}}
\caption{(a) Realization of the deterministic SIRS model (black curve: monthly incidence data, blue curve: model solution): estimated optimal parameters $\beta=0.8823$, $\gamma=0.8785$, $y_{3,0}\approx 227$ with the corresponding basic reproductive number $\frac{\beta}{\gamma+\mu}\lessapprox 1$. (b) One realization of the stochastic SIRS model~\eqref{eq:stoceeuse} with the damping factor $\rho\approx 1.69\times 10^{-2}$ and $y_{3,0}=227$. (c) Histogram of optimal $\beta$ and $\gamma$ from 1000 realizations of \eqref{eq:stoceeuse}. (d) Realization of the non-autonomous model by varying $\beta$ and giving $\gamma=0.879$, $y_{3,0}= 227$ in \eqref{eq:opteeode}. (e) Susceptible and recovered associated with infective population in (d). (f) Comparison of \eqref{eq:opteeode} and \eqref{eq:betaee}.}
\label{fig:realize}
\end{figure}

Conclusions are drawn from the two models. The drawback of the deterministic model is always that the data appears very noisy, whereas the solution regresses the data in between, leading to a higher metric between both terms. The advantages are that the model merely inherits computational simplicity and the basic reproductive number can be calculated. Using the given seasonal data, the basic reproductive number is found to be slightly less than $1$, leading the disease to die out in the long run. However, one may hardly rely on this threshold result since the tested data is contrarily defined on a relatively short period of time. Meanwhile, the stochastic model always requires a large number of solving processes in order to find the most reliable parameters. The solutions might track the data better than that of the deterministic model, but yet small gaps remain between the solution and the data due to the random walks. 

To mitigate the previous issues, we fix all the estimated parameters except $\beta$ and vary $\beta$ in time instead. The estimation is now based on the following fitting problem
\begin{align}\label{eq:opteeode}
	&\min_{\beta\in L^{\infty}(0,T)}\int_{(0,T)} \frac{1}{2}\bra{y_2-y_2^d}^2\,\dd t+\frac{\omega}{2}\int_{(0,T)} \beta^2\,\dd t\nonumber\\
	&\text{subject to }\beta_{\min}\stackrel{\aev}{\leq}\beta\stackrel{\aev}{\leq} \beta_{\max}\text{ and }\dd_t Y=F(Y,\beta(t)),\,Y(0)=Y_0. 
\end{align} 
Using the scheme in \eqref{eq:opteeode}, we see that the solution agrees with the data better than those from the previous two models, see Fig.~\ref{fig:realize}(d). Unlike the previous models, this approach opens a way to relate the optimal $\beta$ with several extrinsic factors, including meteorology. Apparently, the only variable that has correlations with the meteorology parameters is $y_2^d$ (consider Fig.~\ref{fig:meteorology}). The following ansatz
\begin{equation}\label{eq:betaee}
\beta_{e}=\bra{\frac{\dd_t y_2^d}{y_2^d}+\gamma+\mu}\frac{y}{y_1}
\end{equation}
has a direct correlation with $y_2^d$ for a sufficiently large $y_1$, i.e. $y\slash y_1=\text{ constant }\approx 1$. Approximating this constant and using $\beta_{e}$, we obtain a solution trajectory $y_{2,e}$ that well agrees with $y_2$ from \eqref{eq:opteeode}, see Fig.~\ref{fig:realize}(f). In such a case, finding correlations between $\beta$ and the meteorology parameters is doable upon performing a prediction.

\subsection{\textsc{PDE} model}

\begin{figure}[hbbp!]
\centering
\includegraphics[scale=0.23]{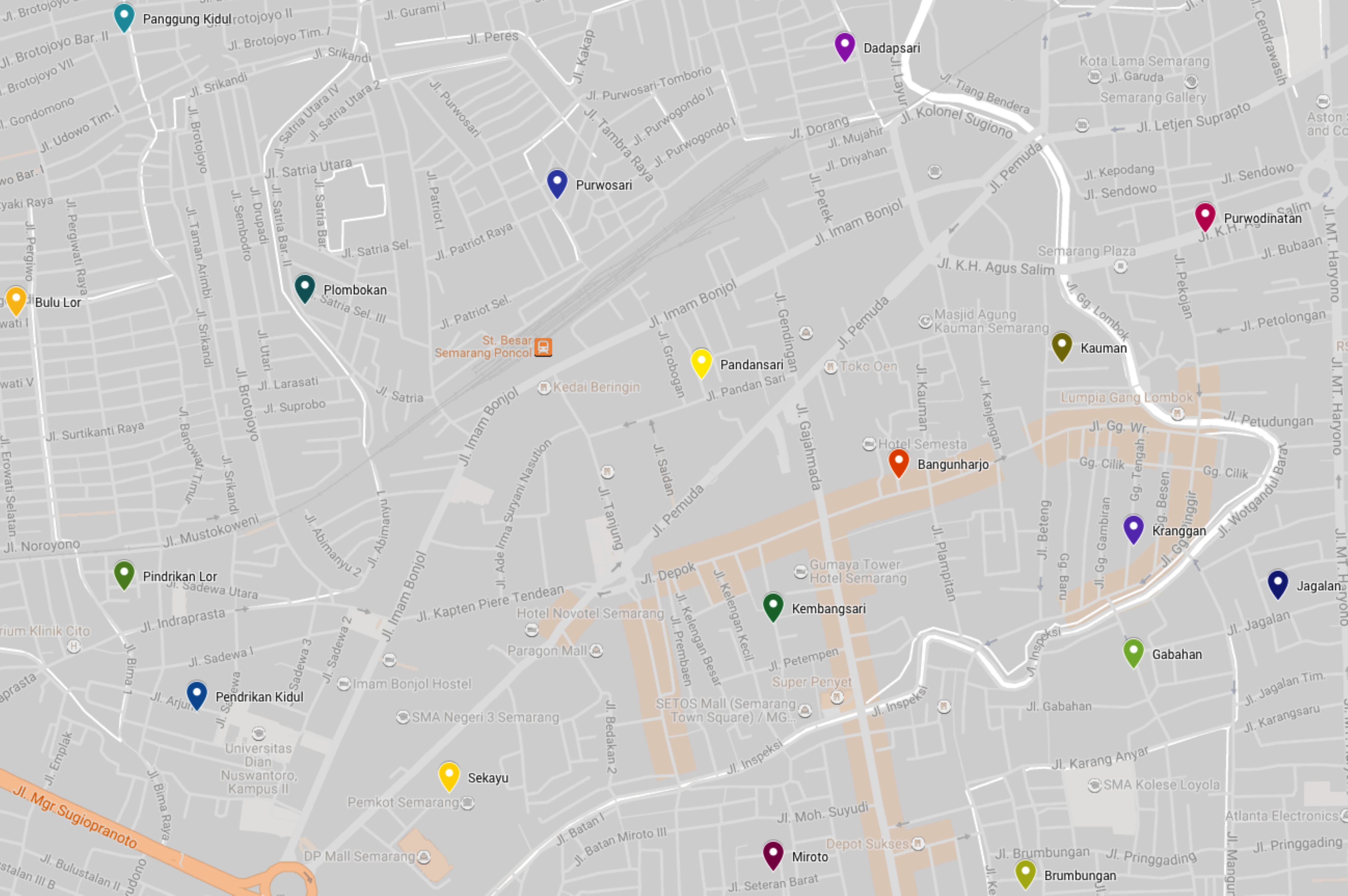}
\caption{18 villages in the city of Semarang for the test problem. The captured domain extends from $0$ km to $3.15$ km in the horizontal axis and from $0$ km to $1.69$ km in the vertical axis.}
\label{fig:dataPDE}
\end{figure}

\textbf{Seasonal-spatial data}. As for testing our \textsc{PDE} model \eqref{eq:model}, we are also supplemented with seasonal-spatial data. The data covers the yearly hospitalized cases (Dengue Fever + Dengue Hemorrhagic Fever + Dengue Shock Syndrome) from 2009 to 2014 in over 160 villages. For the test problem, we use the data from 18 villages as indicated in Fig.~\ref{fig:dataPDE}. 

In the numerical computation, we use year as the time unit. A graphical approximation using an image processing technique allows us to perform an empirical measurement for the spatial domain as in Fig.~\ref{fig:dataPDE} and locate the points indicating the villages. We then rescale the actual domain $(0,3.15)\times (0,1.69)$ (in km) into a unit domain
\begin{equation*}
\Omega = (0,1)\times(0,1)
\end{equation*}
by the divisions $(0,3.15)\slash 3.15$ in the horizontal axis and $(0,1.69)\slash 1.69$ in the vertical axis. Therefore, the length now becomes dimensionless. We assume that susceptible and recovered people are of the same mobility, i.e $d_1=d_3=d$, while the infective people are a bit static, i.e. $d_2=\epsilon d$ for some $\epsilon\in (0,1)$. In a brief presentation, all the parameters are specified as shown in Table~\ref{tab:paramee}. The value of $\gamma$ is derived from its optimal value obtained from the seasonal models.
\begin{table}[htbp!]
\resizebox{\columnwidth}{!}{%
\begin{tabular}{r|c|c|c|c|c|c|c|c|c|c|c}
\hline
Par. & $\Omega$ & $1\slash\mu$ & $1\slash\gamma$ & $1\slash\kappa$ & $y_{1,0}$ & $y_{2,0}$ & $y_{3,0}$ & $1\slash d$ & $\epsilon$ & $\beta_{\min}$ & $\beta_{\max}$\\ 
Unit & - & year & year & year & ind. & ind. & ind. & year & - & year$^{-1}$ & year$^{-1}$\\ 
Value & $(0,1)\times(0,1)$ & $65$ & $\frac{0.879}{12}$ & $9\slash 12$ & $200$ & $y^d_2(0)$ & $3$ & $10^3\slash 5$ & $0.2$ & $0$ & $4$\\ \hline
\end{tabular}
}
\caption{Parameters used in the numerical computation.}
\label{tab:paramee}
\end{table}


\begin{figure}[htbp!]
\hspace*{-1.5cm}
\centering
\includegraphics[width=1.2\textwidth]{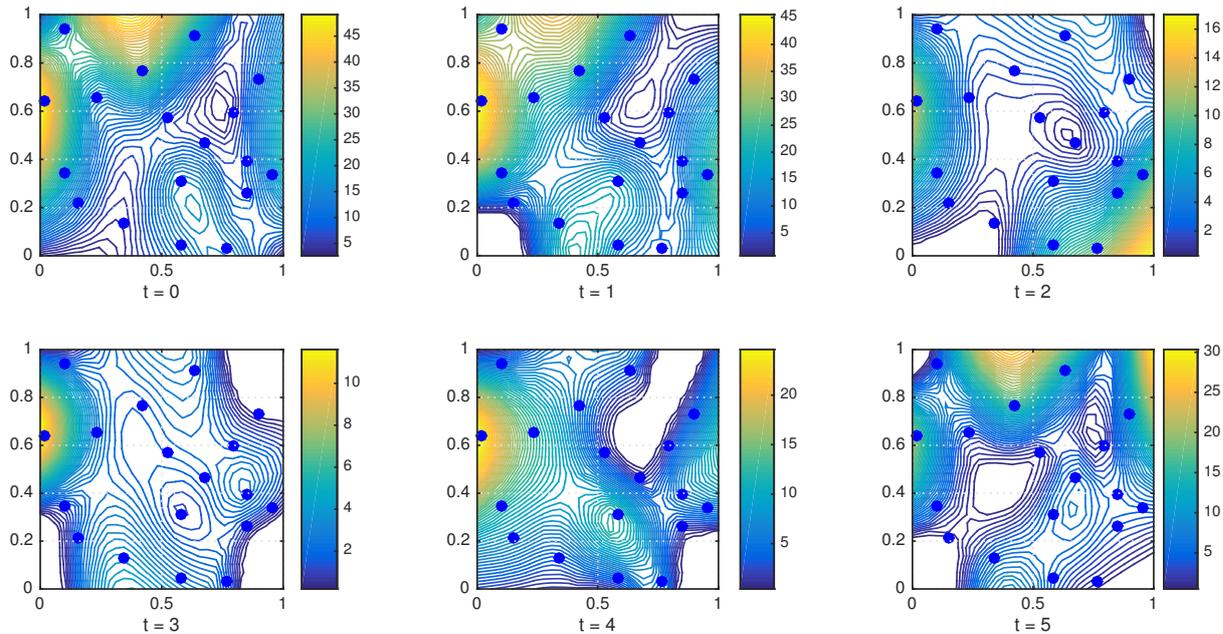}
\caption{The epidemic data after rescaling the domain in Fig.~\ref{fig:dataPDE} to $\Omega$, interpolation and extrapolation, and saturating the negative values into zero.}
\label{fig:DATAee}
\end{figure} 

\textbf{Interpolation and extrapolation}. Since the spatial locations of the villages do not fit with those characterized by the discretization of the system~\eqref{eq:model}, one practical aid is to perform interpolation and extrapolation. Interpolation, on the one hand, helps provide the "projections" of the data points into the specified equidistant space locations or grid-points that are located in the convex hull of the data points. On the other hand, extrapolation extends the projections to all spatial locations in the domain that are beyond the convex hull. Typical inter- and extrapolation usually introduce a flexible surface defined on the whole domain that stands in between the data points. Those data points can be connected to the points in the surface at the same spatial locations via some bands. As the bands are contracted, the surface starts to deform with which the connected points become closer to each other. At this stage, the so-called \emph{relative stiffness} of the surface plays a role in maintaining the smoothness of the surface against the deformation towards a noisy structure of the data. The \textsc{Matlab} toolbox \textsc{Gridfit} \cite{Err2016i} provides this mechanism as a prominent approach in the extrapolation as well as controllable trade-off between the relative stiffness and contraction of the bands connecting the data points and the surface. In the numerical computations, all interpolation and extrapolation tasks were done using this toolbox. As a benchmarking result, the inter- and extrapolated data points using this toolbox are shown in Fig.~\ref{fig:DATAee}. 

The tracking result as well as the distribution of optimal infection rate are presented in Fig.~\ref{fig:resPDE}. We put the corresponding comments for this result in the conclusion. 

\begin{figure}[htbp!]
\hfill\hspace*{-1.5cm}
\subfigure[]{\includegraphics[width=1.2\textwidth]{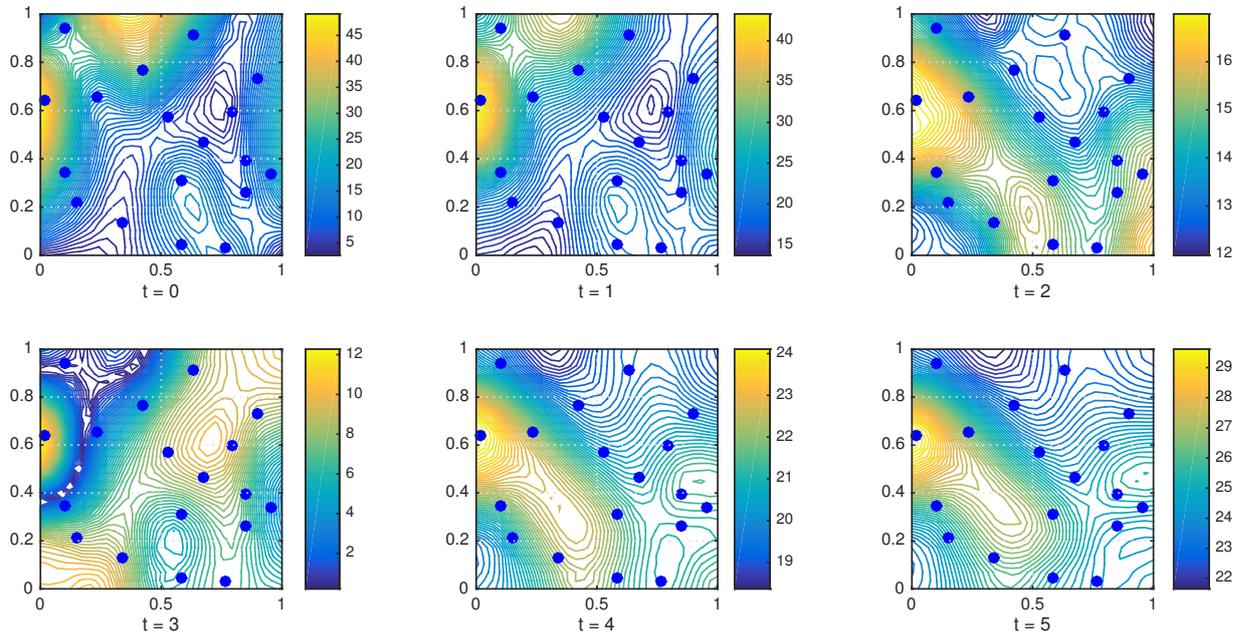}}
\hfill\hspace*{-1.5cm}
\subfigure[]{\includegraphics[width=1.2\textwidth]{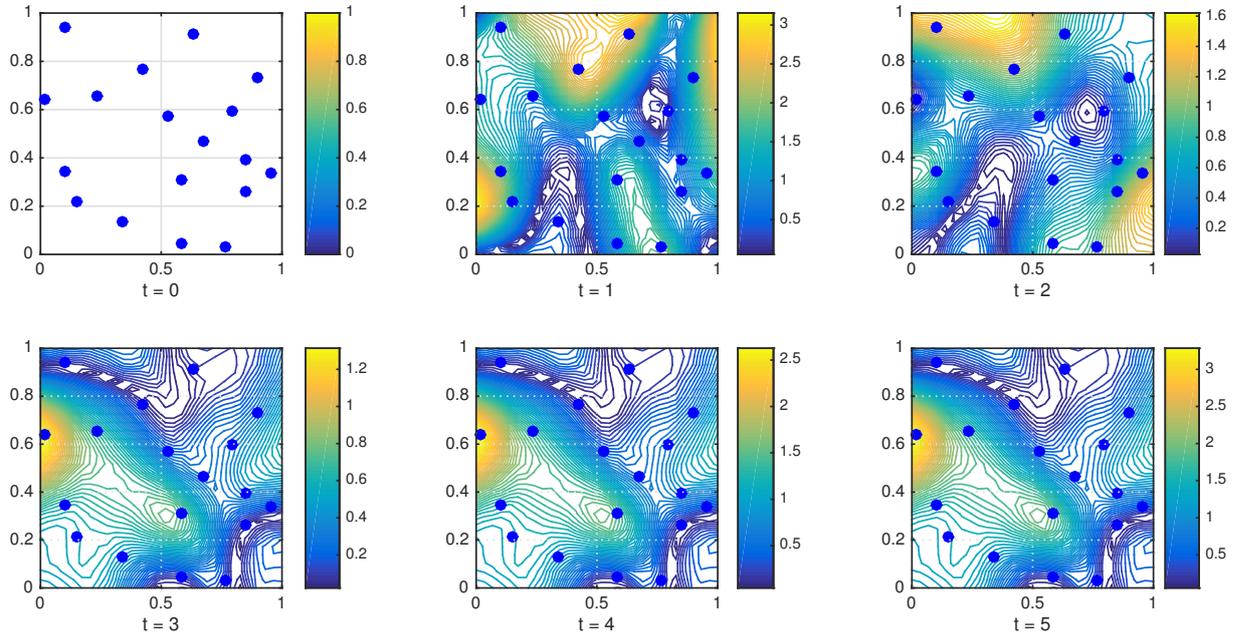}}
\hfill
\caption{(a) The optimal $y_2$ and (b) optimal $\beta$ from time to time with $\omega=10^{-3}$. Note that $\beta$ at $t=0$ is no use since the initial condition was already specified from the given data.}
\label{fig:resPDE}
\end{figure}



\section{Concluding remarks}
\label{sec:conclusion}
In this work, we use two SIRS models to track two datasets from the city of Semarang, Indonesia. The two datasets are different in dimensions, so are the two models: seasonal model and seasonal-spatial model. The seasonal model assumes that the people are distant, since then they interact very weakly. Also, it does not consider that the disease can spread spatially from a locality to other localities and that it takes time to spread across localities. Three variants of the model were proposed based on the definition of the infection rate $\beta$: deterministic with constant $\beta$, stochastic with constant $\beta$, and deterministic with time-varying $\beta$. The computation results show that the first two models require less computational efforts but suffer from tremendous gaps between the data and the actual solutions. The model with time-varying $\beta$ may be more time consuming, but the result outperforms the other models regarding the gaps.

We have improved the SIRS model in order to take into account the spatial behavior of all the sub-populations by adding diffusion terms. We use this model to study the spatial dynamics of the dengue epidemics in the city of Semarang and assume that the people are closely situated, which is a rather stringent condition but however can give us some illustration of the spatial and temporal behavior of a disease epidemics. By optimizing time-space-varying $\beta$, we were able to track the actual solution to the second dataset. Due to some reasons, we can no longer use a value of the regularization parameter $\omega$ that is less than $10^{-3}$, since then it would lead the optimal infection rate to reach seemengly unrealistic levels of more than $3$. Moreover, at this stage we see that performing a prediction is not immediate. Finding a relationship between the optimal $\beta$ and some locally unique extrinsic factors, as also discussed in the seasonal models, is addressed as a subject of future research.

Other possible improvements for the model can include the migrations of the people. This will lead to a no-boundary-condition problem, which of course increases the complexity of the problem, and therefore, increases the computational efforts. In another direction of interest, one can also embrace the aforementioned models with cross-diffusions.

\section*{Acknowledgements}
This research is supported by DAAD from the project MMDF "Mathematical Models for Dengue Fever" project-ID 57128360 (DAAD PPP--Portugal). We thank Sutimin from the Department of Mathematics, Diponegoro University, Indonesia for providing us with the incidence data during his visit to the University of Koblenz on November 2015. The data has been used under the authority of the Health Office of the City of Semarang.


\end{document}